\documentclass{svjour3steffen}

\usepackage{amssymb}
\usepackage{amsmath}
\usepackage{tikz}
\usepackage{subfig}
\usepackage{caption}
\usepackage{float}
\usepackage{mathrsfs}
\usepackage{eufrak}
\usepackage{enumerate}
\usepackage{hyperref}

 \usepackage{sidecap}
 \usepackage{graphicx}
 \captionsetup[subfigure]{labelformat=empty}


\addtolength{\voffset}{-0.2in}
\addtolength{\textheight}{1.8in}
\addtolength{\textwidth}{1.1in}  

\newcommand{\R}{\mathbb{R}}
\newcommand{\supp}{\text{supp}}
\newcommand{\spc}{\text{ }}

\begin{document}


\title{Discrete Wasserstein Barycenters:\\Optimal Transport for Discrete Data}

\author{Ethan Anderes \and Steffen Borgwardt \and Jacob Miller}

\institute{Ethan Anderes \at Department of Statistics, University of California Davis, California, U.S.A. \email{anderes@ucdavis.edu} \and
Steffen Borgwardt (corresponding author) \at Fakult\"at f\"ur Mathematik, Technische~Universit\"at M\"{u}nchen, Germany
	 \email{borgwardt@ma.tum.de}   
	 \and
Jacob Miller \at
Department of Mathematics, University of California Davis, California, U.S.A.
\email{jmiller@math.ucdavis.edu}
}









\maketitle
 \vspace*{-1cm}

\begin{abstract}

Wasserstein barycenters correspond to optimal solutions of transportation problems for several marginals, and as such have a wide range of applications ranging from economics to statistics and computer science. When the marginal probability measures are absolutely continuous (or vanish on small sets) the theory of Wasserstein barycenters is well-developed (see the seminal paper \cite{ac-11}). However, exact continuous computation of Wasserstein barycenters in this setting is tractable in only a small number of specialized cases. Moreover, in many applications data is given as a set of probability measures with finite support. In this paper, we develop theoretical results for Wasserstein barycenters in this discrete setting. Our results rely heavily on polyhedral theory which is possible due to the discrete structure of the marginals.

Our results closely mirror those in the continuous case with a few exceptions. In this discrete setting we establish that Wasserstein barycenters must also be discrete measures and there is always a barycenter which is provably sparse. Moreover, for each Wasserstein barycenter there exists a non-mass-splitting optimal transport to each of the discrete marginals. Such non-mass-splitting transports do not generally exist between two discrete measures unless special mass balance conditions hold. This makes Wasserstein barycenters in this discrete setting special in this regard. 

We illustrate the results of our discrete barycenter theory with a proof-of-concept computation for a hypothetical transportation problem with multiple marginals: distributing a fixed set of goods when the demand can take on different distributional shapes characterized by the discrete marginal distributions. A Wasserstein barycenter, in this case, represents an optimal distribution of inventory facilities which minimize the squared distance/transportation cost totaled over all demands.

\keywords{barycenter \and optimal transport \and multiple marginals \and polyhedral theory \and mathematical programming}
\subclass{90B80 \and 90C05 \and 90C10 \and 90C46 \and 90C90}
\end{abstract}


\section {Introduction}

Optimal transportation problems with multiple marginals are becoming important in applications ranging from economics and finance \cite{bhp-13,ce-10,cmn-10,ght-14} to condensed matter physics and image processing \cite{bpg-12,cfk-13,jzd-98,rpdb-12,ty-05}.  The so-called Wasserstein barycenter corresponds to optimal solutions for these problems, and as such has seen a flurry of recent activity  (see \cite{ac-11,bk-12,bll-11,coo-14,cd-14,mmh-11,mtbmmh-15,p-11b,p-13,p-11,p-14,tmmh-14}).  Given probability measures $P_1,\ldots, P_N$ on $\Bbb R^d$, a Wasserstein barycenter is any probability measure $\bar P$ on $\Bbb R^d$ which satisfies
\begin{equation}
\label{two}
\sum_{i=1}^N W_2(\bar{P},P_i)^2=\inf_{P\in\mathcal P^2(\Bbb R^d)}\sum_{i=1}^N W_2( P, P_i)^2
\end{equation}
where $W_2$ denotes the quadratic Wasserstein distance and  $\mathcal P^2(\Bbb R^d)$ denotes the set of all probability measures on $\Bbb R^d$ with finite second moments. See the excellent monographs \cite{v-03,v-09} for a review of the Wasserstein metric and optimal transportation problems.

Much of the recent activity surrounding Wasserstein barycenters  stems, in part, from the seminal paper \cite{ac-11}. In that paper, Agueh and Carlier establish existence, uniqueness and an optimal transport characterization of $\bar P$ when $P_1,\ldots, P_N$ have sufficient regularity (those which vanish on small sets or which have a density with respect to Lebesgue measure). The transportation characterization of $\bar P$, in particular,  provides a theoretical connection with the solution of (\ref{two}) and the estimation of deformable templates used in medical imaging and computer vision (see \cite{jzd-98,ty-05} and references therein).
Heuristically, any measure $\bar P$ is said to be a deformable template if there exists a set of deformations   $\varphi_1,\ldots, \varphi_N$ which push-forward $\bar P$ to  $P_1,\ldots, P_N$, respectively,  and are all ``as close as possible" to the identity map.
Using a quadratic norm on the distance of each map $\varphi_1(x),\ldots, \varphi_N(x)$ to  $x$, a deformable template $\bar P$ then satisfies
\begin{equation}
\label{one}
\bar P \in \text{arg}\inf_{P\in\mathcal P^2(\Bbb R^d)}\left[ \inf_{\shortstack{\scriptsize $\{(\varphi_1,\ldots,\varphi_N) $\phantom{ s.t. }\\\scriptsize s.t. $\varphi_i( P)=P_i\}$ }} \sum_{i=1}^N \int_{\Bbb R^d}   |\varphi_i(x)-x|^2 d P(x) \right].
\end{equation}
The results of Agueh and Carlier establish that (\ref{two}) and (\ref{one}) share the same solution set when $P_1, \ldots, P_N$ have densities with respect to Lebesgue measures (for example).

\begin{figure}[t]
\begin{center}
\includegraphics[height = 2.25in]{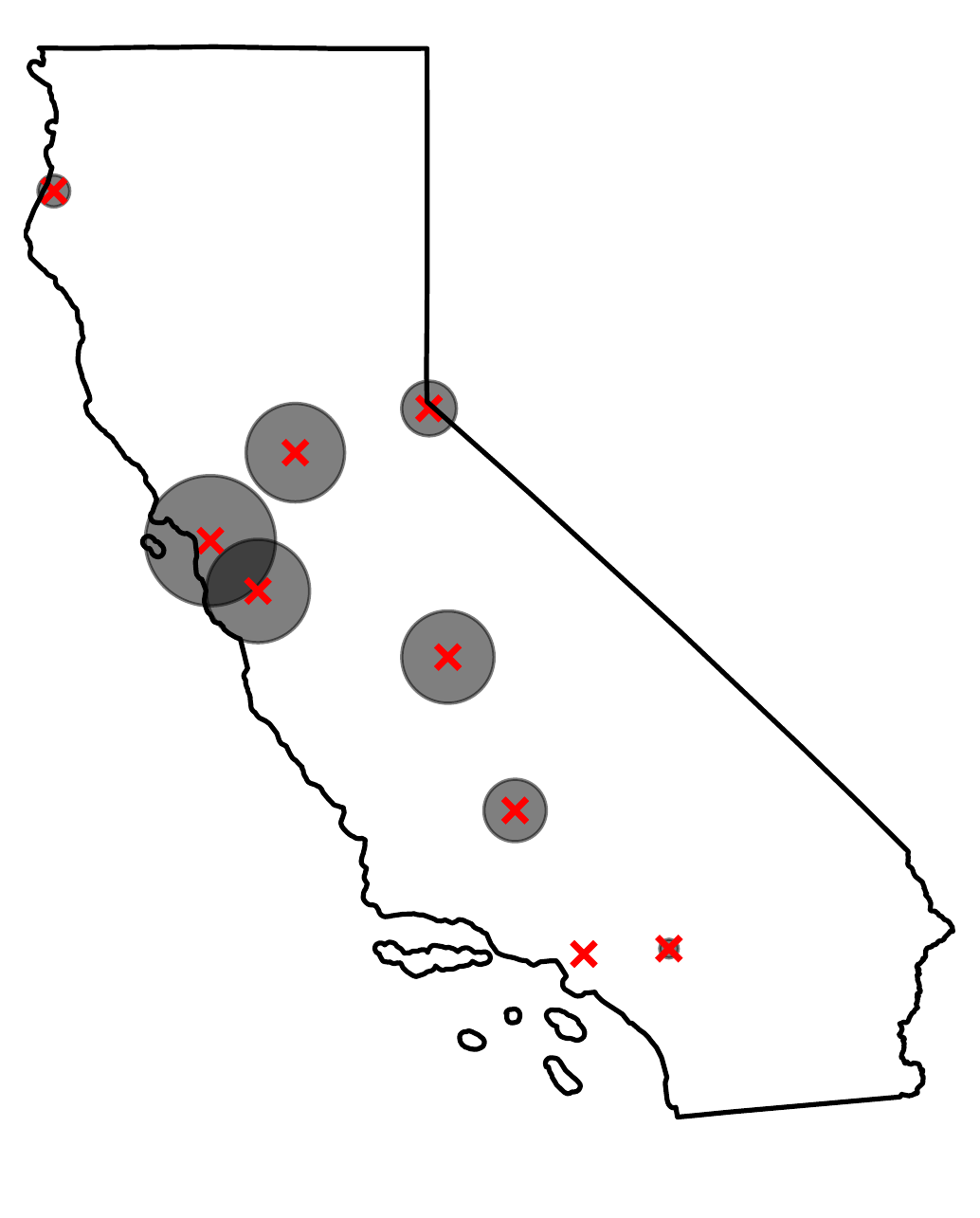}\kern-4em%
\includegraphics[height = 2.25in]{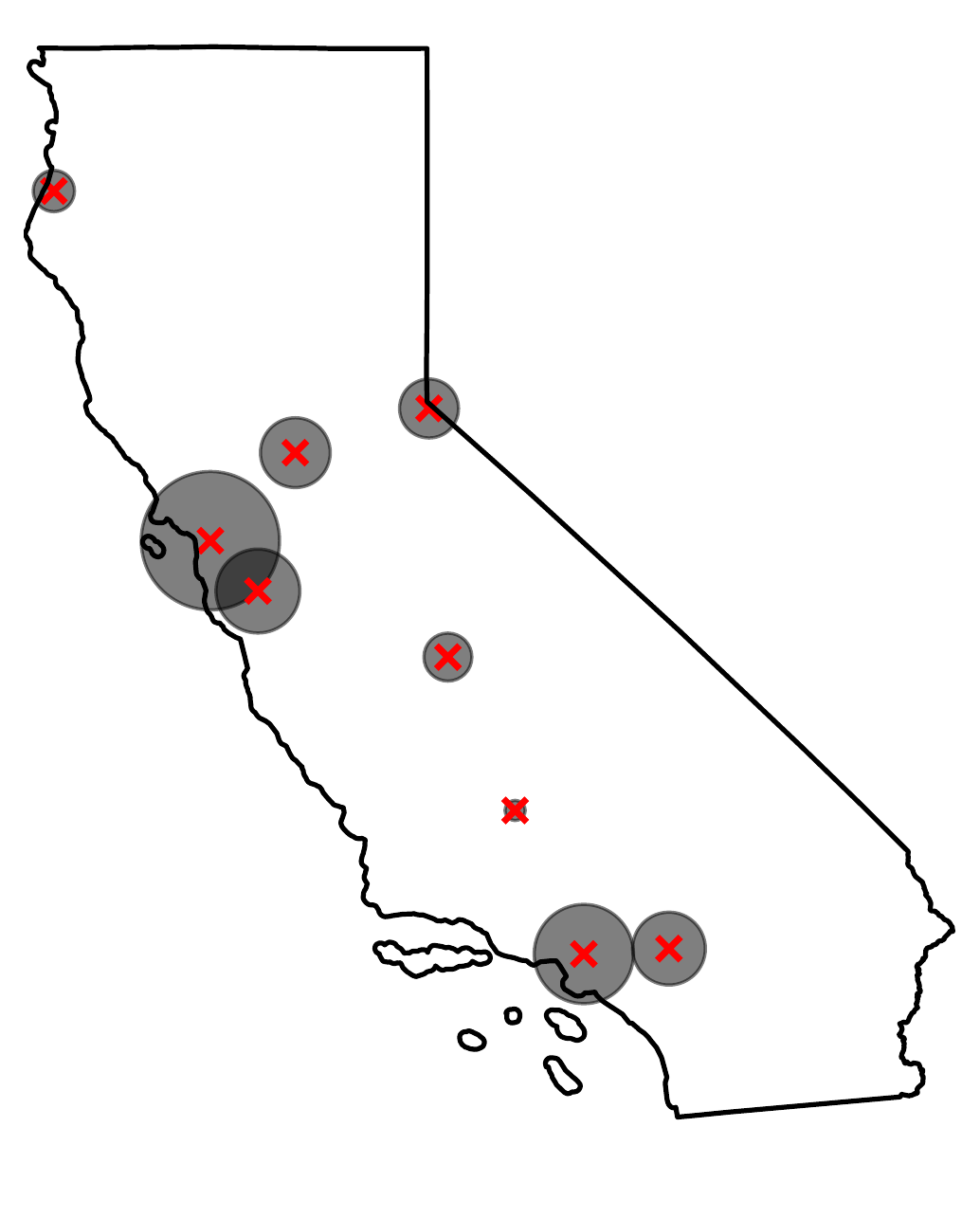}\kern-4em%
\includegraphics[height = 2.25in]{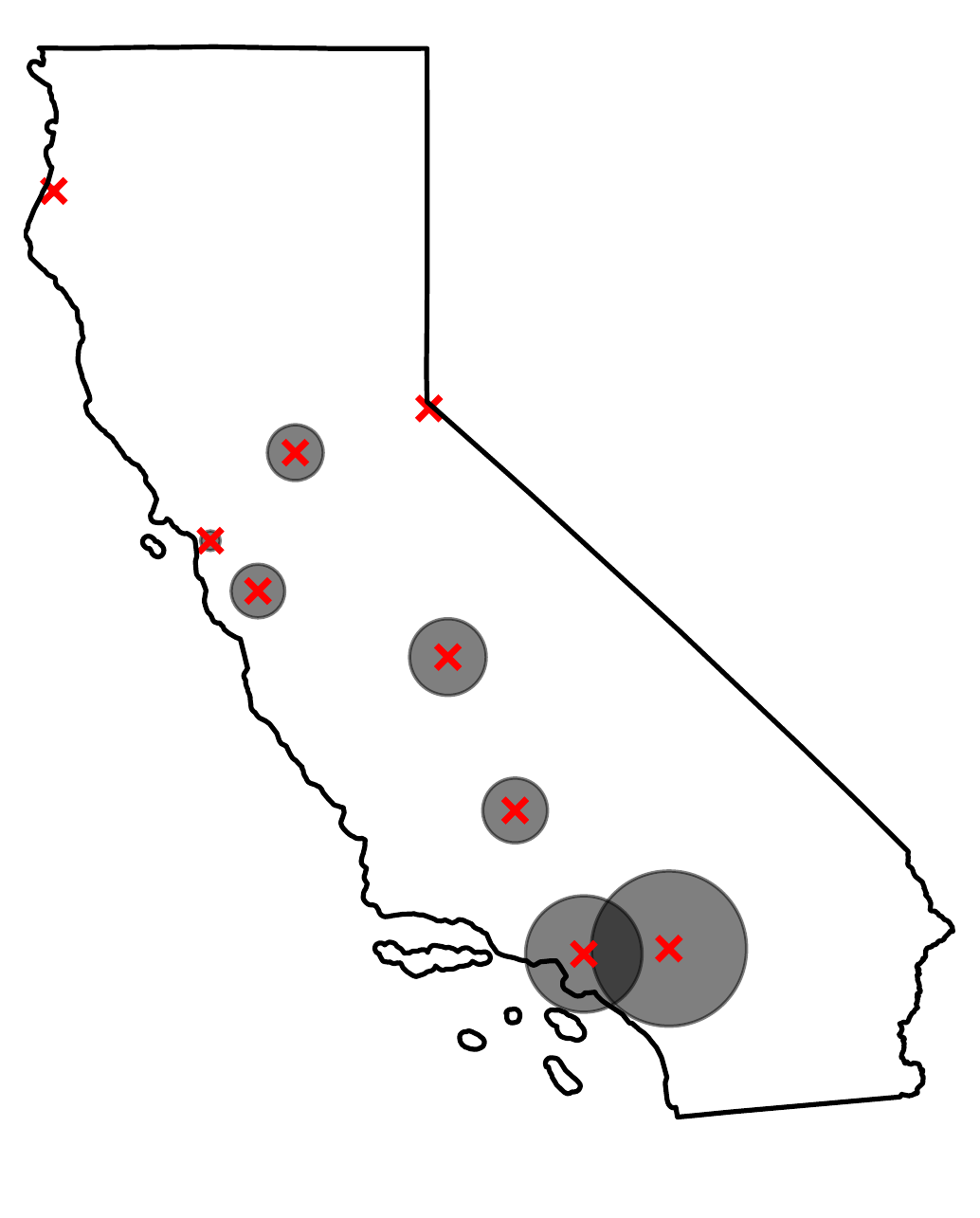}\kern-4em%
\includegraphics[height = 2.25in]{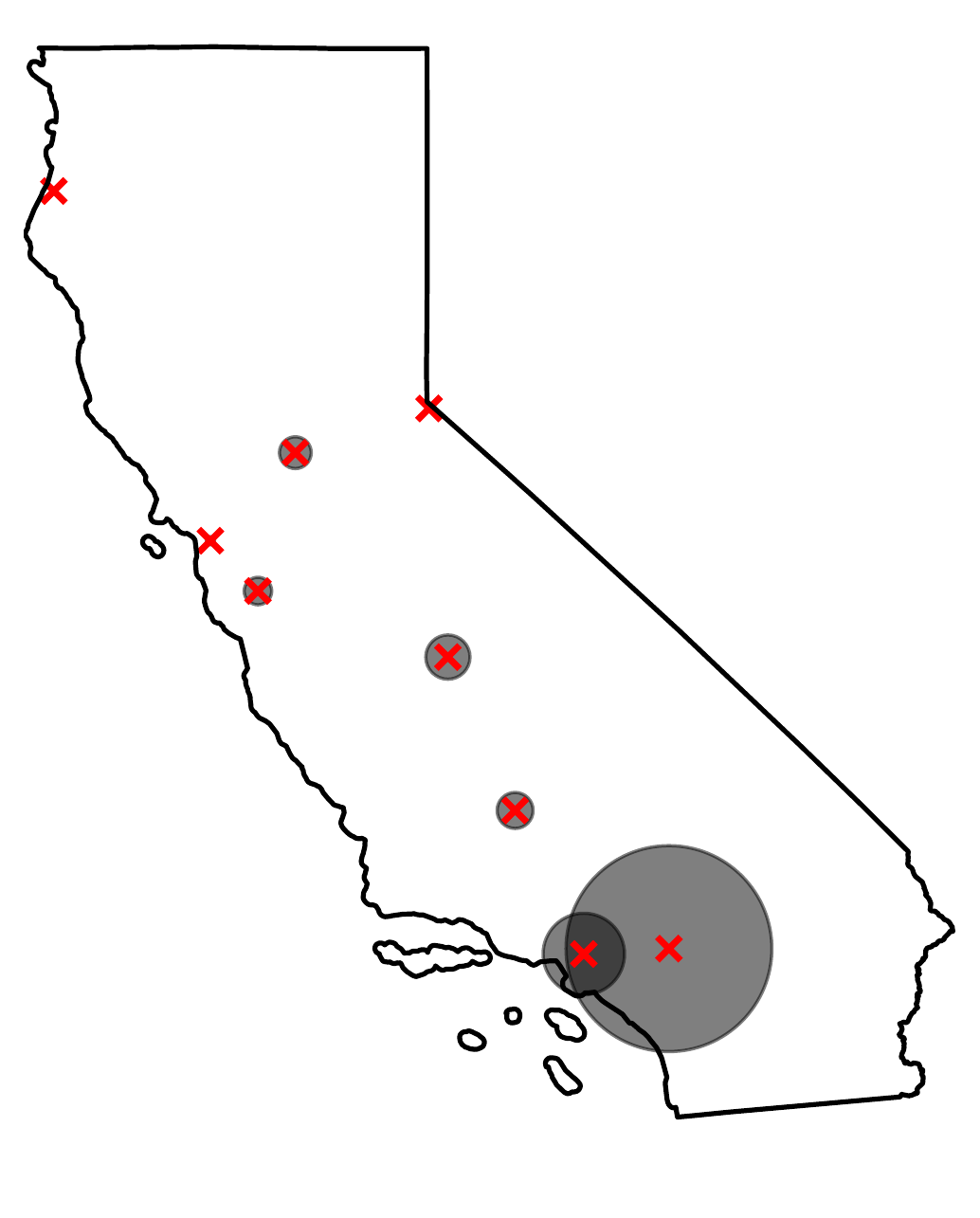}
\end{center}
\caption{\small
The above four images represent hypothetical monthly demands (as a percentage of total supply) for distributing a fixed set of goods to nine California cities (denoted by red `x' marks) in four different months (February, March, June and July). Percent demand within each month is plotted proportional to disk area and is computed from monthly average temperature and population within each city (see Section \ref{computations} for details). When percent demand is treated as a discrete probability distribution, one for each month, the Wasserstein barycenter represents the optimal distribution of inventory facilities which minimize total squared distance/transportation cost over multiple monthly demand requirements. This example serves to illustrate the applicability of the main theoretical properties derived in this paper. Theorem \ref{sparsethm}, for example, establishes that the optimal inventory distribution is a {\em sparse} discrete probability distribution with tight bounds on the scarcity of the barycenter support. In particular, the optimal inventory facilities are located at a small number of sites with relatively large storage capacity, rather than a large number small-capacity facilities distributed over a diffuse set of locations. Theorem \ref{nomasssplit} shows that the optimal transportation plan assigns each to barycenter inventory facility {\em exactly one} city to supply each month. Indeed, this type of non-mass-splitting property of optimal mass transportation is known for absolutely continuous probability distributions but does not usually hold for discrete probability distributions. The discrete Wasserstein barycenter is unique in this regard: there always exists a non-mass-splitting optimal transportation plan to each of the individual probability distributions (represented by monthly demand in this example). The Wasserstein barycenter for this example is shown in Figure \ref{barycenterFigs} and some of the optimal transportation plans are shown in Figure \ref{transportFigs}. Finally, the computational details of this example are presented in Section \ref{computations}.
}
\label{inputFigs}
\end{figure}

While absolutely continuous barycenters are mathematically interesting, in practice, data is often given as a set of {\em discrete probability measures} $P_1,\ldots, P_N$, i.e. those with finite support in $\mathbb{R}^d$.  For example, in Figure \ref{inputFigs} the discrete measures denote different demand distributions over $9$ California cities for different months (this example is analyzed in detail in Section \ref{computations}). For the remainder of the paper we refer to a {\em discrete Wasserstein barycenter} as any probability measure $\bar{ P}$ which satisfies (\ref{two}) and where all the $P_1,\ldots, P_N$ have discrete support.

In this paper we develop theoretical results for discrete Wasserstein barycenters.
Our results closely mirror those in the continuous case with a few exceptions. In the discrete case, the uniqueness and absolute continuity of the barycenter is lost. More importantly, however, is the fact that $\bar P$ is provably discrete when the marginals are discrete (see Proposition \ref{existdiscrete}). This guarantees that finite-dimensional linear programming will yield all possible optimal $\bar P$, and this in turn is utilized in this paper to study the properties of these barycenters from the point of view of polyhedral theory.  In doing so, we find remarkable differences and similarities between continuous and discrete barycenters. In particular, unlike the continuous case, there is always a discrete barycenter with provably sparse finite support; however, analogously to the continuous case, there still exists non-mass-splitting optimal transports from the discrete barycenter to each discrete marginal. Such non-mass-splitting transports generally do not exist between two discrete measures unless special mass balance conditions hold. This makes discrete barycenters special in this regard.

In Section \ref{results}, we introduce the necessary formal notation and state our main results. The corresponding proofs are found in Section $3$. To illustrate our theoretical results we provide a computational example, dicussed in Section \ref{computations} and Figures \ref{inputFigs}-\ref{transportFigs}, for a hypothetical transportation problem with multiple marginals: distributing a fixed set of goods when the demand can take on different distributional shapes characterized by $P_1, \ldots, P_N$. A Wasserstein barycenter, in this case, represents an optimal distribution of inventory facilities which minimize the squared distance/transportation cost totaled over all demands $P_1, \ldots, P_N$.

\begin{figure}
\centering
\includegraphics[height = 2.8in]{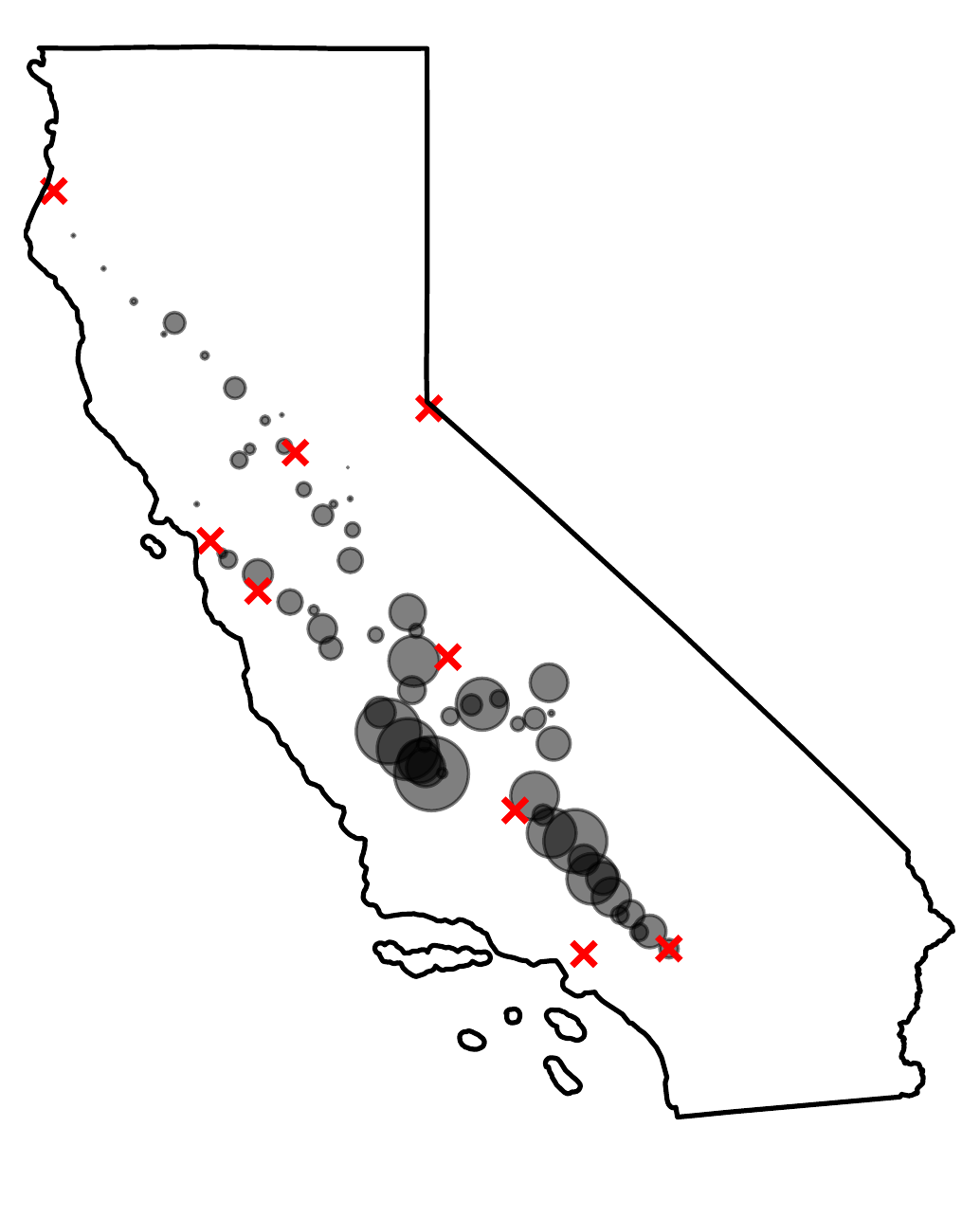}\kern-4em%
\includegraphics[height = 2.8in]{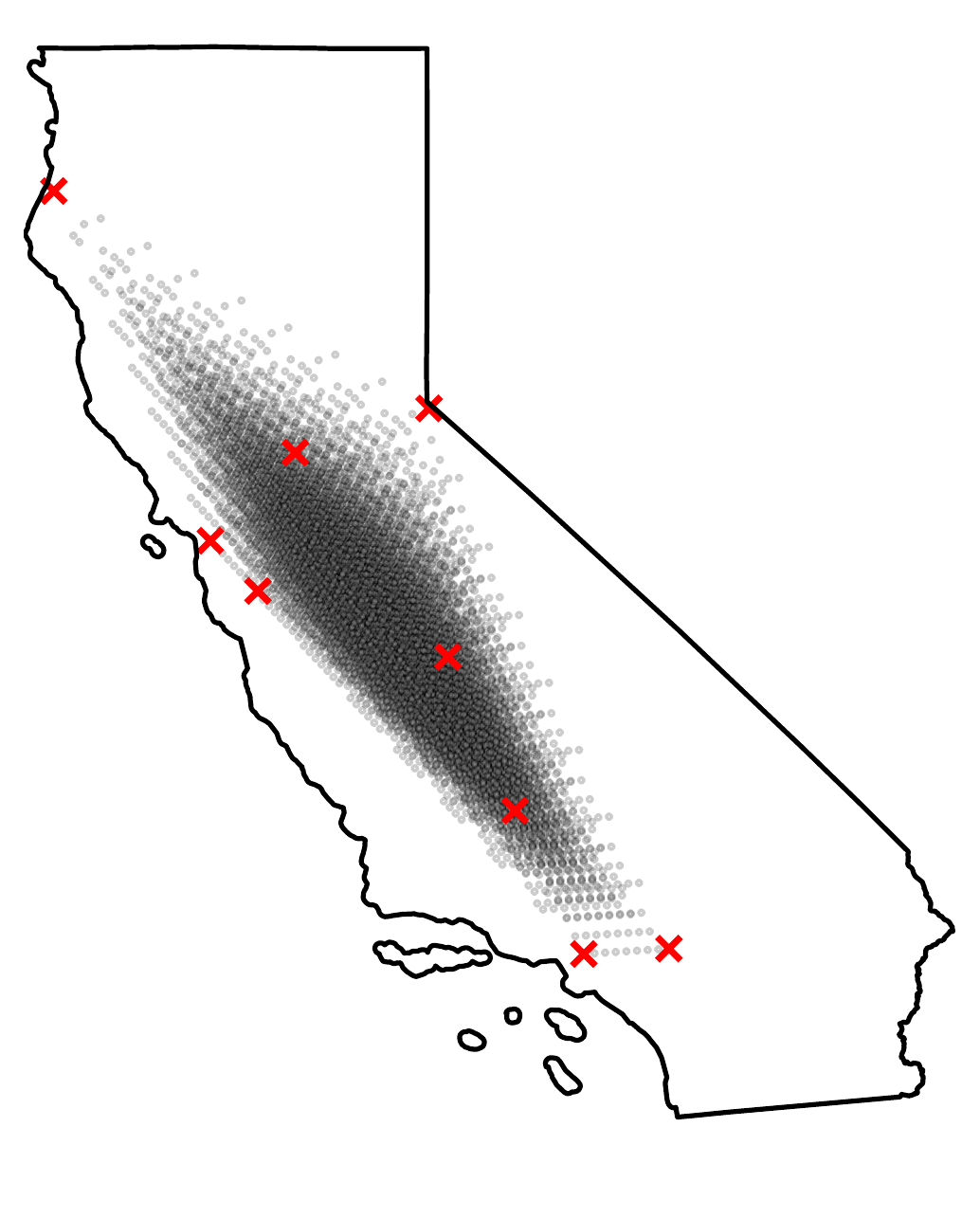}\label{barycenterFigs}
\caption{\small
The {\em leftmost image} shows a Wasserstein barycenter computed from $8$ discrete probability distributions, each representing a different monthly demand ($4$ of the months are shown in Figure \ref{inputFigs}). Notice that barycenter support is extremely sparse---supported on $63$ discrete locations---as compared to the $12870$ possible barycenter support points (shown in the {\em rightmost image}) guaranteed by Proposition \ref{existdiscrete}. Notice that Theorem \ref{sparsethm} gives an upper bound of $65$ support points for the optimal Wasserstein barycenter shown here. The role of Proposition \ref{existdiscrete}, on the other hand, is to give a finite set inclusion bound on the possible barycenter support points (shown at right in this example). This result yields the finite dimensional linear program characterization of optimal Wasserstein barycenters which is key to the analysis presented in this paper.
}
\label{barycenterFigs}
\end{figure}

\section{Results}\label{results}

For the remainder of this paper $P_1,\ldots, P_N$ will denote discrete probability measures on $\R^d$ with finite second moments. 
 Let $\mathcal{P}^2(\Bbb R^d)$ denote the space of all probability measures with finite second moments on $\R^d$. Recall, a Wasserstein barycenter $\bar P$ is an optimizer to the problem
\begin{equation}
\label{barycenter}
\inf_{P\in\mathcal P^2(\Bbb R^d)}\sum_{i=1}^N W_2( P, P_i)^2.
\end{equation}
The first important observation is that all optimizers of (\ref{barycenter}) must be supported in the finite set $S\subset \mathbb{R}^d$ where
\begin{equation}
\label{centers}
S=\left\{\frac{x_1+\ldots+x_N}{N}\big|\spc x_i\in\supp(P_i)\right\}
\end{equation}
is the set of all possible centroids coming from a combination of support points, one from each measure $P_i$. In particular, letting $\mathcal{P}_{\hspace*{-0.05cm}\mathcal{S}}^2(\Bbb R^d)=\{P\in\mathcal{P}^2(\Bbb R^d)|\spc \supp(P)\subseteq S\}$ the infinite dimensional problem (\ref{barycenter}) can be solved by replacing the requirement $P\in\mathcal P^2(\Bbb R^d)$ with $P\in\mathcal{P}_{\hspace*{-0.05cm}\mathcal{S}}^2(\Bbb R^d)$ to yield a finite dimensional minimization problem. This result follows from 
Proposition \ref{existdiscrete} below.

\begin{minipage}{\linewidth}
\begin{proposition}\label{existdiscrete} Suppose $P_1,\ldots,P_N$ are discrete probability measures on $\Bbb R^d$. Let  $\Pi({P_1,\ldots,P_N})$ denote the set of all coupled random vectors $(X_1,\ldots, X_N)$ with marginals $X_i\sim P_i$ and let $\overline X$ denote the coordinate average $\frac{X_1+\ldots+X_N}{N}$. Let $S$ be defined as in (\ref{centers}).
\begin{enumerate}[i)]
\item\label{prop_i} There exists $(X^o_1,\ldots,X_N^o)\in \Pi({P_1,\ldots,P_N})$ such that
\begin{equation}
\label{OptMean}
E\bigl|\overline{ X^o}\bigr|^2=\sup_{\shortstack{\scriptsize $ (X_1,\ldots,X_N)$\\ \scriptsize
$\quad\quad\in\Pi({P}_1,\ldots,{P}_N)$ }} E\bigl |\overline X \bigr|^2.
\end{equation}
\item\label{prop_ii}
Any $(X^o_1,\ldots,X_N^o)\in \Pi({P_1,\ldots,P_N})$ which satisfies (\ref{OptMean}) has  $\supp(\mathcal L\overline{ X^o})\subseteq S$ and
\begin{equation}
\label{7777}
\sum_{i=1}^N W_2\bigl(\mathcal L\overline{ X^o},P_i\bigr)^2=\inf_{P\in \mathcal P^2(\Bbb R^d)}\sum_{i=1}^N W_2(P,P_i)^2=\inf_{P\in \mathcal \mathcal{P}_{\hspace*{-0.05cm}\mathcal{S}}^2(\Bbb R^d)}\sum_{i=1}^N W_2(P,P_i)^2.
\end{equation}
where $\mathcal L \overline{ X^o}$ denotes the distribution (or law) of $\overline{ X^o}$.
\item\label{prop_iii} Any $\bar{P}\in\arg\min\limits_{P\in\mathcal{P}^2(\Bbb R^d)}\sum\limits_{i=1}^N W_2( P, P_i)^2$ satisfies $\supp(\bar{P})\subseteq S$.
\end{enumerate}
\end{proposition}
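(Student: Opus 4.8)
The plan is to reduce everything to one algebraic identity together with the gluing lemma for couplings. For any points $y,x_1,\ldots,x_N\in\R^d$ with centroid $\overline x=\frac1N\sum_i x_i$, the parallel-axis decomposition gives
\[
\sum_{i=1}^N |y-x_i|^2 \;=\; N|y-\overline x|^2 \;+\; \sum_{i=1}^N |x_i-\overline x|^2 ,
\]
so $\overline x$ is the unique minimizer of $y\mapsto\sum_i|y-x_i|^2$. I would also record $\sum_i|x_i-\overline x|^2=\sum_i|x_i|^2-N|\overline x|^2$, so that for any coupling $(X_1,\ldots,X_N)\in\Pi(P_1,\ldots,P_N)$,
\[
E\Big[\sum_{i=1}^N |X_i-\overline X|^2\Big] \;=\; C - N\,E|\overline X|^2, \qquad C:=\sum_{i=1}^N\int|x|^2\,dP_i ,
\]
where $C$ does not depend on the coupling. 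Hence maximizing $E|\overline X|^2$ over $\Pi$ is equivalent to minimizing the multi-marginal transport cost $E[\sum_i|X_i-\overline X|^2]$, and I set $M:=\sup_\Pi E|\overline X|^2$.

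Part \ref{prop_i}) is then existence for this multi-marginal problem. Because the $P_i$ have finite support, a coupling is a nonnegative array $\pi$ indexed by the product of the supports subject to the linear marginal constraints; this is a compact polytope, nonempty since it contains the product coupling, and $E|\overline X|^2$ is linear (hence continuous) in $\pi$, so the supremum is attained. (More generally one invokes tightness of $\Pi$ and lower semicontinuity of $\pi\mapsto E[\sum_i|X_i-\overline X|^2]$ for this continuous nonnegative cost.) Fixing a maximizer $(X_1^o,\ldots,X_N^o)$, every realization of $\overline{X^o}$ is a centroid of support points, so $\overline{X^o}\in S$ almost surely and $\supp(\mathcal L\overline{X^o})\subseteq S$, giving the support claim of part \ref{prop_ii}) immediately.

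For the value identity in \ref{prop_ii}) I would establish $B:=\inf_P\sum_i W_2(P,P_i)^2 = C-NM$ by two inequalities. For $B\le C-NM$, the coupling $(\overline{X^o},X_i^o)$ witnesses $W_2(\mathcal L\overline{X^o},P_i)^2\le E|\overline{X^o}-X_i^o|^2$, and summing gives $\sum_i W_2(\mathcal L\overline{X^o},P_i)^2\le E[\sum_i|X_i^o-\overline{X^o}|^2]=C-NM$. For $B\ge C-NM$, fix any $P$, take optimal couplings $\gamma_i$ of $(P,P_i)$, and glue them along the common marginal $P$ to obtain $(Y,X_1,\ldots,X_N)$ with $Y\sim P$ and $(Y,X_i)\sim\gamma_i$; the decomposition then yields
\[
\sum_i W_2(P,P_i)^2 = E\Big[\sum_i|Y-X_i|^2\Big] = N\,E|Y-\overline X|^2 + E\Big[\sum_i|X_i-\overline X|^2\Big] \;\ge\; 0 + (C-NM),
\]
since $(X_1,\ldots,X_N)\in\Pi$ forces $E|\overline X|^2\le M$. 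Taking the infimum over $P$ gives $B\ge C-NM$, so $B=C-NM$; combined with the first bound this also shows $\mathcal L\overline{X^o}$ attains $B$, which is (\ref{7777}).

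Part \ref{prop_iii}) reuses this computation to pin down the support. If $\bar P$ is any barycenter, gluing optimal transports produces $(Y,X_1,\ldots,X_N)$ with $Y\sim\bar P$, whence
\[
B = \sum_i W_2(\bar P,P_i)^2 = N\,E|Y-\overline X|^2 + E\Big[\sum_i|X_i-\overline X|^2\Big] \;\ge\; 0 + B .
\]
Equality forces both nonnegative terms to meet their lower bounds; in particular $E|Y-\overline X|^2=0$, so $Y=\overline X$ almost surely, and since $\overline X$ is always a centroid of support points we get $Y\in S$ a.s., i.e. $\supp(\bar P)\subseteq S$. The one genuinely nontrivial ingredient is the gluing lemma used to realize the separate optimal pairwise couplings on a common probability space; with finite supports this is elementary (take the $X_i$ conditionally independent given $Y$), but it is precisely the step that drives both the lower bound $B\ge C-NM$ and the forced-equality argument in \ref{prop_iii}).
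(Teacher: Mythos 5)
Your proposal is correct and follows essentially the same route as the paper: your parallel-axis identity and its expectation form are exactly the computations in the paper's displays (\ref{hugeThing}) and (\ref{minor}), your gluing of the pairwise-optimal couplings via conditional independence given $Y$ is the paper's construction verbatim, and your forced-equality argument for part \emph{\ref{prop_iii})} is just the contrapositive of the paper's version. The only (harmless) deviation is that you prove part \emph{\ref{prop_i})} directly by compactness of the finite-dimensional coupling polytope, where the paper simply cites the general results of Kellerer and Rachev; your argument is a valid and more self-contained substitute in this finite-support setting.
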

\end{minipage}

Notice that the existence of $(X_1^o,\ldots, X_N^o)$, in part {\em \ref{prop_i})} of the above proposition, follows immediately from the general results found in Kellerer \cite{k-84} and Rachev \cite{r-84}. Parts {\em \ref{prop_ii})} and {\em \ref{prop_iii})} are proved in Section \ref{Proof_section}. We also remark that during the preparation of this manuscript the authors became aware that Proposition \ref{existdiscrete} was independently noted in \cite{coo-14}, with a sketch of a proof. For completeness we will include a detailed proof of this statement which will also provide additional groundwork for Theorem \ref{nomasssplit}  and  Theorem  \ref{sparsethm} below.

\begin{figure}
\centering
\includegraphics[height = 2.8in]{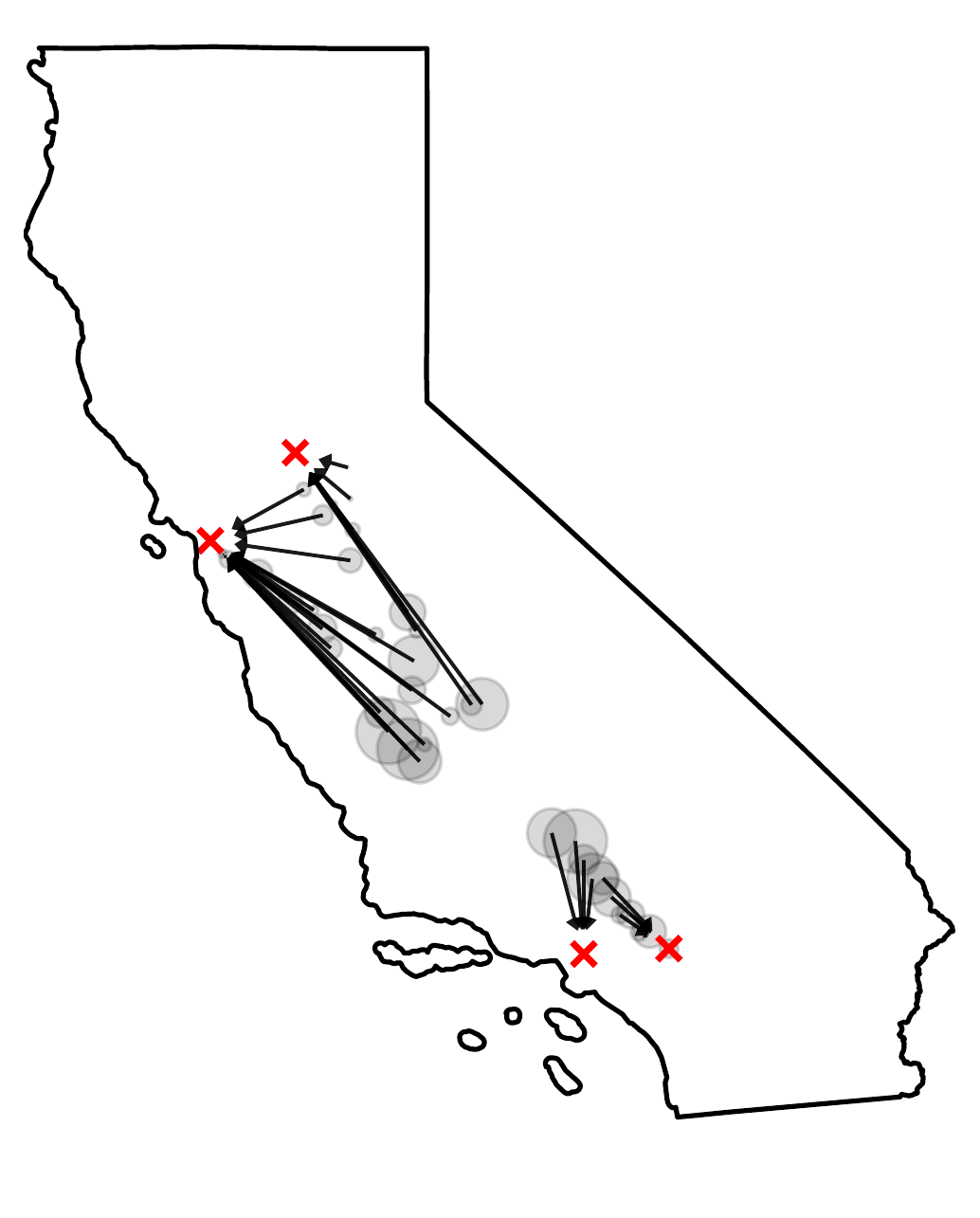}\kern-4em%
\includegraphics[height = 2.8in]{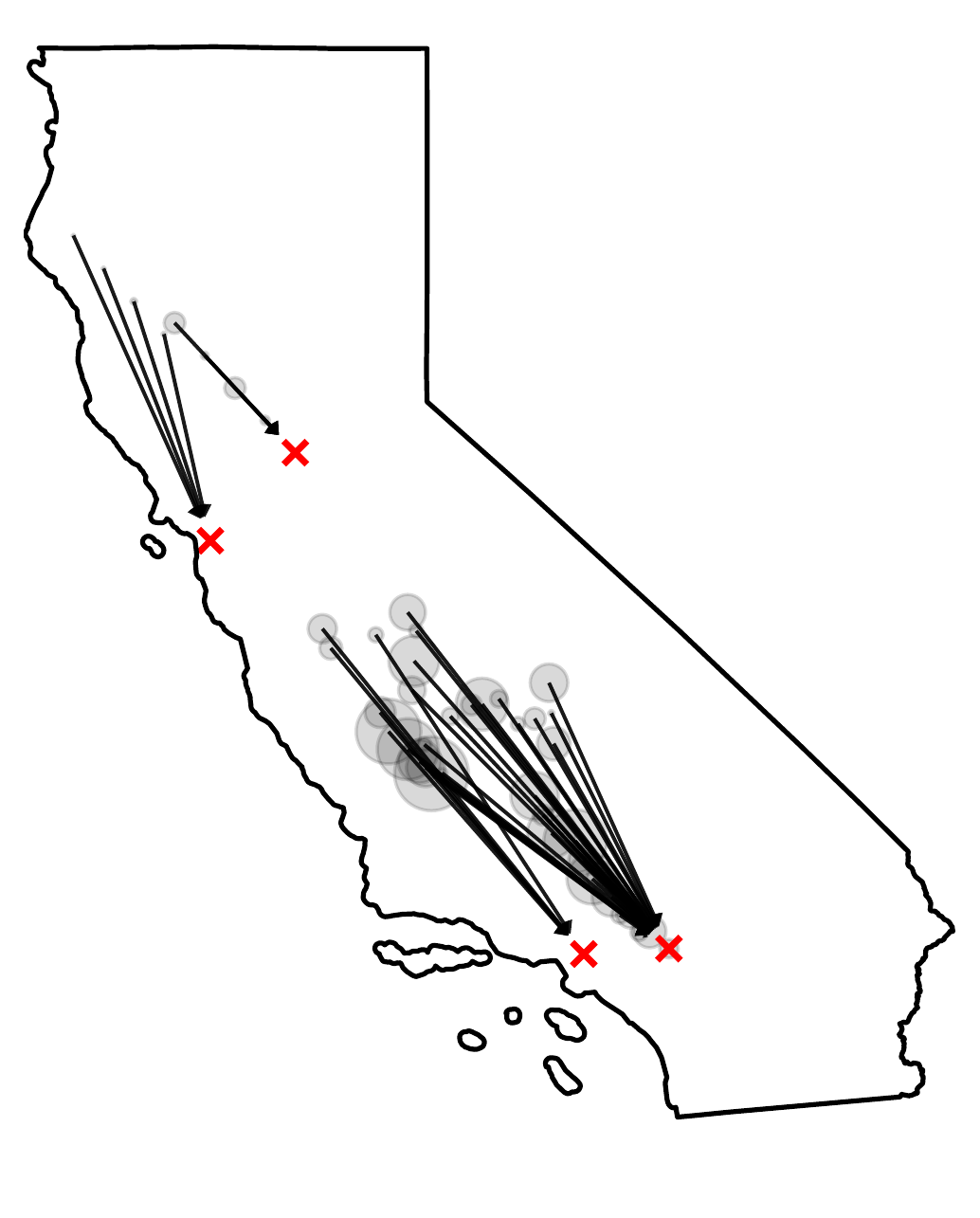}
\caption{\small
These two plots illustrate the special property of discrete Wasserstein barycenters proved in Theorem \ref{nomasssplit}: there is no mass-splitting when optimally transporting the inventory at each barycenter support to the corresponding demand for each month. The {\em image at left} shows all the transported mass flowing from the optimal barycenter into San Francisco, Sacramento, Los Angeles and San Bernardino for month of March (the corresponding March demand is shown middle-left in  Figure \ref{inputFigs}). The {\em image at right} shows the corresponding optimal transport for the month of July. Notice that these figures only show the barycenter support points which transport into the four cities shown here. The other barycenter supports transport goods to the other five cities not shown. We remark that Theorem \ref{nomasssplit} also establishes that transportation is balanced so that the transportation displacements sum to zero at each barycenter support point.
}
\label{transportFigs}
\end{figure}

Proposition \ref{existdiscrete} guarantees that any barycenter $\bar{P}$ computed with discrete marginals has the form
\begin{equation}
\bar{P}=\sum_{{\bf x}\in S}z_{\bf x}\delta_{\bf x},\hspace{0.2in}z_{\bf x}\in\R_{\geq 0}.
\end{equation}
Here $\delta_{\bf x}$ is the Dirac-$\delta$-function at ${\bf x}\in \mathbb{R}^d$ and $z_{\bf x}$ corresponds to the mass (or probability) at ${\bf x}$. 
This implies that any coupling of $\bar P$ with $P_i$, which realizes the Wasserstein distance, is in fact characterized by a finite matrix. Treating the coordinates of these matrices and the values $z_{\bf x}$ as variables, the set of all solutions to (\ref{two}) are obtained through a finite-dimensional linear program (see (\ref{LPequ}) below). In \cite{coo-14} a similar linear program was used to find approximate barycenters for sets of absolutely continuous measures by finitely approximating the support of $\bar P$ (which is sub-optimal for the continuous problem). Our use of the finite linear program characterization of $\bar P$ is different from continuous approximation. We use a version of the linear program to analyze properties of discrete barycenters themselves. Indeed, since the set of all discrete barycenters is on a face of the underlying polyhedron, one can study their properties by means of polyhedral theory.

Our first theorem illustrates a similarity between barycenters defined from absolutely continuous $P_1, \ldots, P_N$ and barycenters defined in the discrete setting. The results of \cite{ac-11} establish, in the absolutely continuous case, that there exist optimal transports from the barycenter to each $P_i$ which are optimal in the sense of Wasserstein distance and are gradients of convex functions. Theorem \ref{nomasssplit} shows that such transports not only exist for discrete barycenters but also share similar properties.

\begin{theorem}\label{nomasssplit} Suppose $P_1,\ldots,P_N$ are discrete probability measures. Let  $\bar{P}$ denote a Wasserstein barycenter solution to (\ref{two}) and let $X$ be a random variable with distribution $\bar{P}$. Then there exist finite convex functions $\psi_i:\mathbb{R}^d \rightarrow \mathbb{R}^d$, for each $i=1,\ldots, N$, such that
\begin{enumerate}[i)]
\item \label{nomasssplit_i}$\displaystyle\nabla\psi_i(\bar{P}) = P_i, \spc\forall i.$
\item \label{nomasssplit_ii}$\displaystyle E|X - \nabla\psi_i(X)|^2 = W_2(\bar{P},P_i)^2,\spc\forall i.$
\item \label{nomasssplit_iii} $\displaystyle\frac{1}{N}\sum_{i=1}^N \nabla\psi_i(x_j) = x_j,\spc\forall x_j\in\supp(\bar{P}).$
\item \label{nomasssplit_iv} $\displaystyle\frac{1}{N}\sum_{i=1}^N\psi_i(x_j) = \frac{|x_j|^2}{2},\spc\forall x_j\in\supp(\bar{P}).$
\end{enumerate}
\end{theorem}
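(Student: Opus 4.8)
The plan is to realize the given barycenter through an optimal coupling of the marginals and then read the transport maps and potentials directly off the dual of a finite linear program. I would first connect the given $\bar{P}$ to the machinery of Proposition \ref{existdiscrete}. Let $\bar{P}$ be any barycenter; by part \ref{prop_iii}) its support lies in $S$. Gluing optimal transports from $\bar{P}$ to each $P_i$ along $\bar{P}$ produces a random vector $(Y,X_1,\ldots,X_N)$ with $Y\sim\bar{P}$, $X_i\sim P_i$, and each pair $(Y,X_i)$ optimal. Since for fixed $x_1,\ldots,x_N$ the strictly convex map $y\mapsto\sum_i|y-x_i|^2$ is minimized uniquely at the centroid $\overline{x}$, and since $\bar{P}$ attains the barycenter value, I would deduce $Y=\overline{X}$ almost surely and that $(X_1,\ldots,X_N)$ satisfies (\ref{OptMean}). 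Thus $\bar{P}=\mathcal{L}\overline{X}$ for an optimal coupling, and it suffices to construct the $\psi_i$ for this coupling.

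Next I would exploit the fact that maximizing $E|\overline{X}|^2$ over $\Pi(P_1,\ldots,P_N)$ is a finite linear program in the joint masses over tuples $(x_1,\ldots,x_N)\in\supp(P_1)\times\cdots\times\supp(P_N)$ subject to the marginal constraints. Taking its linear programming dual yields potentials $u_i$ on $\supp(P_i)$ with $\sum_i u_i(x_i)\geq|\overline{x}|^2$ for every tuple, and, by complementary slackness, $\sum_i u_i(x_i)=|\overline{x}|^2$ on every tuple in the support of the optimal coupling. With these in hand I would define $\psi_i(y)=\max_{x_i\in\supp(P_i)}\bigl(\langle y,x_i\rangle-\tfrac{N}{2}u_i(x_i)\bigr)$, a finite convex function since it is a maximum of finitely many affine functions.

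The crux is the no-mass-splitting step. Fix a barycenter support point $x_j$ and a support tuple $(x_1^j,\ldots,x_N^j)$ with centroid $x_j$, so $\sum_i u_i(x_i^j)=|x_j|^2$. Replacing only the $i$-th coordinate by an arbitrary $x_i\in\supp(P_i)$ keeps dual feasibility, and subtracting the support equality gives $u_i(x_i)-u_i(x_i^j)\geq\tfrac{2}{N}\langle x_j,x_i-x_i^j\rangle+\tfrac{1}{N^2}|x_i-x_i^j|^2$. A short computation then shows the affine piece indexed by $x_i^j$ exceeds every other piece at $x_j$ by at least $\tfrac{1}{2N}|x_i-x_i^j|^2$, so the maximum defining $\psi_i(x_j)$ is attained uniquely at $x_i^j$. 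Hence $\psi_i$ is differentiable at $x_j$ with $\nabla\psi_i(x_j)=x_i^j$, and this value cannot depend on which support tuple through $x_j$ was chosen; the transport to each $P_i$ is therefore single-valued, which is the non-mass-splitting property. Since $X_i=\nabla\psi_i(\overline{X})$ whenever $\overline{X}=x_j$, the map $\nabla\psi_i$ pushes $\bar{P}$ forward to $P_i$, giving \ref{nomasssplit_i}).

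The remaining parts then fall out. Part \ref{nomasssplit_iii}) is immediate from $\tfrac{1}{N}\sum_i\nabla\psi_i(x_j)=\tfrac{1}{N}\sum_i x_i^j=x_j$. For \ref{nomasssplit_iv}), the uniqueness above gives $\psi_i(x_j)=\langle x_j,x_i^j\rangle-\tfrac{N}{2}u_i(x_i^j)$, so $\tfrac{1}{N}\sum_i\psi_i(x_j)=|x_j|^2-\tfrac{1}{2}\sum_i u_i(x_i^j)=\tfrac{|x_j|^2}{2}$ using the support equality, which is precisely why the normalization $\tfrac{N}{2}u_i$ was chosen. Finally \ref{nomasssplit_ii}) follows because the coupling $(\mathrm{id},\nabla\psi_i)_{\#}\bar{P}$ is supported on the graph of the subdifferential of the convex function $\psi_i$, hence is cyclically monotone and therefore optimal for the quadratic cost; alternatively one integrates the balanced identity $\sum_i|x_j-x_i^j|^2=\sum_i|x_i^j|^2-N|x_j|^2$ against $\bar{P}$ and compares with the barycenter value via Proposition \ref{existdiscrete}. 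I expect the single-valuedness (no-mass-splitting) step to be the main obstacle, since this property fails for generic pairs of discrete measures; its resolution is exactly the one-coordinate perturbation inequality above, which couples the centroid geometry of $S$ with dual feasibility.
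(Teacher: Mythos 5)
Your proposal is correct, and it reaches the theorem by a genuinely different route than the paper. The paper stays inside the linked transportation LP (\ref{LPequ}) and its dual (\ref{dualLP}): it first proves no-mass-splitting in the \emph{primal} (Lemma \ref{nosplitlem}) by a mass-moving perturbation toward the centroid, builds $\psi_i(x)=\max_{x_{ik}\in\supp(P_i)}\langle x,x_{ik}\rangle-\frac{1}{2}|x_{ik}|^2+\frac{1}{2}\tau^*_{ik}$ from the dual variables, and then invokes \emph{strict} complementary slackness (Proposition \ref{strictcomp}) to conclude that the maximizing affine piece at each $x_j\in\supp(\bar P)$ is unique; part \emph{iv)} comes from the dual relation $\sum_i\theta^*_{ij}=0$. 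You instead reduce an arbitrary barycenter to an optimal multi-marginal coupling ($\bar P=\mathcal L\overline{X}$ with $(X_1,\ldots,X_N)$ satisfying (\ref{OptMean}) --- your gluing step is in substance the inequality chain (\ref{minor}) from the proof of Proposition \ref{existdiscrete}, and it makes the centroid property of Lemma \ref{nosplitlem} \emph{i)} automatic), and you dualize the tuple-indexed LP $\max E|\overline{X}|^2$ rather than the star LP. Your decisive observation is that \emph{ordinary} complementary slackness suffices there: perturbing one coordinate of a support tuple, dual feasibility gives $u_i(x_i)-u_i(x_i^j)\geq\frac{2}{N}\langle x_j,x_i-x_i^j\rangle+\frac{1}{N^2}|x_i-x_i^j|^2$, and the quadratic term --- which comes from the strict convexity of the cost, not from LP theory --- yields the strict gap $\frac{1}{2N}|x_i-x_i^j|^2$ between affine pieces that the paper extracts from strict complementarity as a black box. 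This simultaneously gives differentiability of $\psi_i$ at $x_j$, single-valuedness (the no-mass-splitting of Lemma \ref{nosplitlem} \emph{ii)}), and parts \emph{i)}--\emph{iii)}; your normalization $\psi_i(y)=\max_{x_i\in\supp(P_i)}\bigl(\langle y,x_i\rangle-\tfrac{N}{2}u_i(x_i)\bigr)$ together with the support equality $\sum_i u_i(x_i^j)=|x_j|^2$ gives \emph{iv)}, and either of your justifications of \emph{ii)} (cyclical monotonicity of a coupling supported on a subdifferential graph, or termwise collapse of the inequality chain against Proposition \ref{existdiscrete}) is sound, since each term satisfies $E|\overline{X}-X_i|^2\geq W_2(\bar P,P_i)^2$ while the sum attains the barycenter value. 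The trade-off: your argument is more self-contained and conceptually transparent (no strict-complementarity theorem, no separate primal lemma), while the paper's argument lives entirely in the LP (\ref{LPequ}) with $O(|S|\sum_i S_i)$ variables that is actually solved in Section \ref{computations}; your multi-marginal LP has $\prod_i S_i$ variables, which is immaterial for a proof but exponentially larger as a computational object.
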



 Intuitively, one would expect the support of a barycenter to be large to accommodate such a condition. This is particularly plausible since such these transports must realize the Wasserstein distance between each measure and the barycenter.  However, it has been noted that the barycenters of discrete measures are often sparse in practice; see for example \cite{cd-14}. Our second main result resolves this tension and establishes that there always is a  Wasserstein barycenter whose solution is theoretically guaranteed to be sparse.

\begin{theorem}\label{sparsethm} Suppose  $P_1,\ldots,P_N$ are discrete probability measures, and let $S_i=|\supp(P_i)|$. Then there exists a barycenter $\bar{P}$ of these measures such that
\begin{equation}\label{sparseequ}
|\supp(\bar{P})| \leq \sum_{i=1}^N S_i  - N + 1.
\end{equation}
\end{theorem}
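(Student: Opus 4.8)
The plan is to recast the search for a barycenter as a finite-dimensional linear program over multi-marginal transport plans, and then to invoke the fact that a basic optimal solution (a vertex of the feasible polyhedron) is supported on few coordinates. Concretely, write $\Omega = \supp(P_1)\times\cdots\times\supp(P_N)$ and introduce a variable $\pi(\omega)\ge 0$ for each tuple $\omega=(x_1,\ldots,x_N)\in\Omega$, subject to the marginal constraints $\sum_{\omega:\,\omega_i=x}\pi(\omega)=P_i(x)$ for every $i$ and every $x\in\supp(P_i)$. Using the variance identity $\sum_i|x_i-\bar x|^2=\sum_i|x_i|^2-N|\bar x|^2$ with $\bar x(\omega)=\tfrac1N\sum_i\omega_i$, minimizing the total transport cost over such plans is equivalent to maximizing the linear objective $\sum_{\omega\in\Omega}\pi(\omega)\,|\bar x(\omega)|^2$, since the term $\sum_i|x_i|^2$ contributes only a constant depending on the fixed marginals. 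By Proposition \ref{existdiscrete}, any optimizer $\pi^\star$ of this program yields a barycenter as the pushforward $\bar P=\sum_{\omega\in\Omega}\pi^\star(\omega)\,\delta_{\bar x(\omega)}$ under the centroid map.

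Next I would count constraints. The equality constraints number $\sum_{i=1}^N S_i$, but they are dependent: for each fixed $i$, summing its $S_i$ marginal constraints yields the single total-mass equation $\sum_{\omega}\pi(\omega)=\sum_x P_i(x)=1$, so all $N$ blocks reproduce the same equation. This exhibits at least $N-1$ linear dependencies among the rows, whence the rank of the constraint matrix is at most $\sum_{i=1}^N S_i-N+1$. The feasible region is nonempty (the independent coupling $\pi(\omega)=\prod_{i=1}^N P_i(\omega_i)$ is feasible) and compact, and the objective is continuous, so the program attains its optimum at a vertex $\pi^\star$.

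I would then apply the standard polyhedral fact that a basic feasible solution has at most $\mathrm{rank}$-many nonzero entries, so $\pi^\star$ has at most $\sum_{i=1}^N S_i-N+1$ nonzero coordinates. Since $\supp(\bar P)$ is the image under $\bar x(\cdot)$ of $\{\omega:\pi^\star(\omega)>0\}$, and distinct tuples collapsing to a common centroid only shrink this image, we conclude $|\supp(\bar P)|\le|\{\omega:\pi^\star(\omega)>0\}|\le \sum_{i=1}^N S_i-N+1$, which is exactly the claimed bound.

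The step I expect to require the most care is not the rank count — for the upper bound only the $N-1$ exhibited dependencies are needed, so that direction is routine — but rather confirming that the linear-program reformulation is faithful, namely that optimal plans correspond precisely to Wasserstein barycenters through Proposition \ref{existdiscrete}, and that replacing an arbitrary optimal plan by a vertex of the polyhedron does not disturb optimality of the induced barycenter. I would verify carefully that every vertex $\pi^\star$ is again an optimal multi-marginal plan (it optimizes the same linear objective over the same feasible set) and hence still produces a genuine barycenter, so that sparsifying to a vertex is legitimate.
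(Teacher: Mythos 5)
Your proof is correct, but it takes a genuinely different route from the paper's. The paper never works on the multi-marginal coupling polytope directly: it starts from an arbitrary barycenter $\bar P'$ together with an optimal $N$-star transport ${\bf y'}$ (a solution of the linked LP (\ref{LPequ})), translates it via an inductive construction (Lemma \ref{scheme} part \emph{iii)}) into a ``location-fixed transportation scheme'' --- a vector indexed by tuples $(q_{h0},q_{h1},\ldots,q_{hN})\in S\times\supp(P_1)\times\cdots\times\supp(P_N)$ carrying an explicit barycenter-location coordinate --- then sparsifies by passing to a basic optimal solution of the cost-minimization LP over that scheme (Lemma \ref{sparsemin}), and finally translates back. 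The polyhedral core of both arguments is literally the same computation: $\sum_{i=1}^N S_i$ marginal equalities, of which $N-1$ are redundant because each block of rows sums to the all-ones vector, so a basic solution has at most $\sum_{i=1}^N S_i - N + 1$ nonzero entries. What your version buys is economy: by maximizing $\sum_{\omega}\pi(\omega)|\bar x(\omega)|^2$, i.e.\ $E|\overline X|^2$, over the coupling polytope $\Pi(P_1,\ldots,P_N)$, you can invoke Proposition \ref{existdiscrete} \emph{i)}--\emph{ii)} wholesale --- any maximizer, in particular a vertex, pushes forward under the centroid map to a barycenter --- which eliminates the $N$-star/scheme translation and its induction entirely; your closing checks (a vertex exists since the polytope is nonempty and bounded, a vertex is still optimal, and the centroid map can only merge support points) are exactly the right ones and are all valid. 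What the paper's heavier formulation buys is reusability: the explicit location coordinate $q_{h0}$ lets the sparsification run inside the transport structure of any given barycenter without assuming mass sits on centroid-consistent tuples, and the $N$-star transport and duality machinery is shared with the proof of Theorem \ref{nomasssplit} and with the computations of Section \ref{computations}, so the sparse barycenter there arrives together with an explicit equal-cost transport to each $P_i$, whereas in your argument that transport is recovered only implicitly through Proposition \ref{existdiscrete}.
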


\noindent We would like to stress how low this guaranteed upper bound on the size of the support of the barycenter actually is. For example, let every $P_i$ have a support of the same cardinality $T$. Then $|S| \leq T^N$ and if the support points are in general position one has $|S|=T^N$. In contrast, the support of the barycenter has cardinality at most $NT$.

Additionally, the bound in Theorem \ref{sparsethm} is the best possible in the sense that, for any natural numbers $N$ and $W$, it is easy to come up with a set of $N$ measures for which $|\supp(\bar{P})| = \sum_{i=1}^N S_i  - N + 1=W$: Choose $P_1$ to have $W$ support points and uniformly distributed mass $\frac{1}{W}$ on each of these points. Choose the other $P_i$ to have a single support point of mass $1$. Then $|S|=W$ and the barycenter uses all of these possible support points with mass $\frac{1}{W}$. 

A particularly frequent setting in applications is that all the $P_i$ are supported on the same discrete grid, uniform in all directions, in $\mathbb{R}^d$. See for example \cite{cd-14,rpdb-12} for applications in computer vision with $d=2$. In this situation, the set $S$ of possible centroids is a finer uniform grid in $\mathbb{R}^d$, which allows us to strengthen the results in Proposition \ref{existdiscrete} and Theorem \ref{sparsethm}.

\begin{corollary}\label{sparsegrid}  Let $P_1,\ldots,P_N$ be discrete probability measures supported on an $L_1{\times}\ldots{\times}L_d$-grid, uniform in all directions, in $\mathbb{R}^d$. Then there exists a barycenter $\bar{P}$ supported on a refined $(N(L_1-1)+1)\times \ldots \times (N(L_d-1)+1)$-grid, uniform in all directions, with $|\supp(\bar P)|\leq N(\prod\limits_{i=1}^d L_i-1) +1$. In particular, the density of the support of the barycenter on this finer grid is less than \begin{equation*} \frac{1}{N^{d-1}}\prod\limits_{i=1}^d\frac{ L_i}{ (L_i-1)}.
\end{equation*}
\end{corollary}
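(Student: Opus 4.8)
The plan is to obtain the statement as a direct consequence of Proposition~\ref{existdiscrete} and Theorem~\ref{sparsethm}, the only additional ingredient being the lattice geometry of the centroid set $S$ in (\ref{centers}) when the marginals are supported on a uniform grid. I would write the grid in coordinate direction $k$ as the equally spaced points $a_k, a_k+h_k,\ldots,a_k+(L_k-1)h_k$, so that every $x\in\supp(P_i)$ has $k$-th coordinate of the form $a_k+m\,h_k$ with $m\in\{0,1,\ldots,L_k-1\}$.

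First I would locate $S$. For any $x_1,\ldots,x_N$ with $x_i\in\supp(P_i)$, the $k$-th coordinate of the centroid $\frac{1}{N}(x_1+\cdots+x_N)$ equals $a_k+\frac{h_k}{N}\,m$, where $m$ is a sum of $N$ integers each lying in $\{0,\ldots,L_k-1\}$ and hence an integer in $\{0,1,\ldots,N(L_k-1)\}$. Consequently every point of $S$ lies on the refined grid, uniform in each direction, whose $k$-th direction consists of the $N(L_k-1)+1$ equally spaced values $a_k+\frac{h_k}{N}\,m$, $m=0,\ldots,N(L_k-1)$, with spacing $h_k/N$. By Proposition~\ref{existdiscrete} we have $\supp(\bar P)\subseteq S$ for every barycenter, so every barycenter is supported on this $(N(L_1-1)+1)\times\cdots\times(N(L_d-1)+1)$ grid, which gives the first assertion.

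Next I would bound the support size. Since each $P_i$ is supported on the coarse grid, $S_i=|\supp(P_i)|\leq\prod_{k=1}^d L_k$, and applying Theorem~\ref{sparsethm} yields a barycenter $\bar P$ with
\[
|\supp(\bar P)|\ \leq\ \sum_{i=1}^N S_i-N+1\ \leq\ N\prod_{k=1}^d L_k-N+1\ =\ N\Bigl(\prod_{k=1}^d L_k-1\Bigr)+1,
\]
which is the claimed cardinality bound.

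Finally, the density is $|\supp(\bar P)|$ divided by the number $\prod_{k=1}^d(N(L_k-1)+1)$ of points of the refined grid. Here I would bound the numerator above by $N\prod_{k=1}^d L_k$ (using $-N+1\leq 0$) and the denominator below by $\prod_{k=1}^d N(L_k-1)=N^d\prod_{k=1}^d(L_k-1)$, the latter bound being \emph{strict} since each factor satisfies $N(L_k-1)+1>N(L_k-1)$. Dividing gives density $<\frac{N\prod_k L_k}{N^d\prod_k(L_k-1)}=\frac{1}{N^{d-1}}\prod_{k=1}^d\frac{L_k}{L_k-1}$, as desired. I do not expect a genuine obstacle: the corollary merely packages the two earlier results together with the grid structure, and the only points demanding care are verifying that $S$ really sits inside a grid that is uniform in each direction (so that the refined-grid description and its cardinality are correct) and tracking the strictness of the final inequality, which hinges entirely on the ``$+1$'' in each factor of the denominator.
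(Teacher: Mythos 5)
Your proposal is correct and follows essentially the same route as the paper: show that every centroid in $S$ (hence every barycenter support point, by Proposition~\ref{existdiscrete}) lies on the refined $(N(L_1-1)+1)\times\cdots\times(N(L_d-1)+1)$-grid via the same integer-coordinate computation, then apply Theorem~\ref{sparsethm} with $S_i\leq\prod_{k=1}^d L_k$, and finally bound numerator and denominator of the density exactly as the paper does. The only cosmetic difference is that you parametrize an axis-aligned grid $a_k+mh_k$ while the paper allows a general grid spanned by linearly independent vectors ${\bf e_1},\dots,{\bf e_d}$ (your computation carries over verbatim in that basis), and you are in fact slightly more careful than the paper in noting that the strictness of the final density bound comes from the ``$+1$'' factors in the denominator.
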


\section{Proofs}\label{Proof_section}
In this section we prove the results outlined in Section \ref{results}. We begin with a proof of Proposition \ref{existdiscrete}.

\subsection{\bf Existence of Discrete Barycenters}

Recall that a discrete barycenter $\bar{P}$ is an optimizer of $(\ref{barycenter})$ when $P_1,\ldots, P_N$ are discrete probability measures. We will show that $\bar{P}$ must have the form of a coordinatewise average of optimally coupled random vectors with marginals given by the $P_i$. In particular, we will establish the existence of $N$ random vectors $X_1^o,\ldots, X_N^o$ with marginal distributions $X_i^o\sim  P_i$ that are as highly correlated as possible so that the variability in the average $\overline{X^o}=\frac{X_1^o+\cdots+ X_N^o}{N}$ is maximized. Once these coupled random vectors  $X_1^o,\ldots, X_N^o$ are obtained, the distribution of the average $\overline{X^o}$ (denoted $\mathcal L \overline{X^o}$) will serve as $\bar{P}$.



\begin{proof}[{of Proposition \ref{existdiscrete}}]
As remarked earlier, part {\em \ref{prop_i})} of Proposition \ref{existdiscrete} follows from the general results of Kellerer \cite{k-84} and Rachev \cite{r-84}. Therefore there exists an optimally coupled random vector  $(X_1^o,\ldots, X_N^o)\in \Pi({P_1,\ldots,P_N})$  which satisfies (\ref{OptMean}). We will show that
\begin{equation}
\label{step1inprop1}
\sum_{i=1}^N W_2\bigl(\mathcal L\overline{ X^o},P_i\bigr)^2=\inf_{P\in \mathcal P^2(\Bbb R^d)}\sum_{i=1}^N W_2(P,P_i)^2.
\end{equation}
Notice the definition of $S$ automatically implies $\supp(\mathcal L\overline{ X^o})\subseteq S$ so that (\ref{step1inprop1}) will imply
\begin{equation}
\sum_{i=1}^N W_2\bigl(\mathcal L\overline{ X^o},P_i\bigr)^2=\inf_{\mathcal{P}_{\hspace*{-0.05cm}\mathcal{S}}^2(\Bbb R^d)}\sum_{i=1}^N W_2(P,P_i)^2=\inf_{P\in \mathcal P^2(\Bbb R^d)}\sum_{i=1}^N W_2(P,P_i)^2
\end{equation}
and complete the proof of part  {\em \ref{prop_ii})}. 

 So suppose $P\in \mathcal P^2(\Bbb R^d)$. Then for all $i=1,\ldots, N$ there exists an optimally coupled random vector  $(Y^*_i,X_i^*)\in \Pi(P,P_i)$ such that $ W_2(P,P_i)^2 =  E|Y^*_i-X_i^*|^2$. (This is a well known property of the Wasserstein distance $W_2$, see for example Proposition 2.1 in \cite{v-03}.) 
 Since the random variables $Y^*_1,\ldots, Y^*_N$ all have distribution $P$ it is easy to see that there exists a generalized Gluing lemma for the existence of a random vector $(Y, X_1,\ldots, X_N)\in\Pi(P,P_1,\ldots,P_N)$ such that $(Y,X_i)$ has the same distribution as $(Y^*,X_i^*)$ for each $i$. This can be seeing by first sampling a single  $Y\sim P$ then sample  $X_1,\ldots, X_N$ independently conditional on $Y$ where the conditional distribution $Pr(X_i =x|Y = y)$ is set to $Pr(X_i^* =x|Y^* = y)$ (the finite support of $P_1,\ldots, P_N$ is sufficient to guarantee existence of these conditional distributions). This yields
 \begin{equation}
 \label{rew}
  \frac{1}{N} \sum_{i=1}^N W_2(P,P_i)^2= \frac{1}{N}\sum_{i=1}^N E |Y_i^*-X^*_i|^2= \frac{1}{N}\sum_{i=1}^N E |Y-X_i|^2.
  \end{equation}
Now note that $X_i\sim P_i$ and $X_i^o\sim P_i$. Thus
\begin{align}
\label{hugeThing}
 \sum_{i=1}^N E\bigl |\overline{X^o}- X^o_i \bigr|^2&=\sum_{i=1}^NE\bigl|\overline{X^o}\bigr|^2-2E\sum_{i=1}^N\langle \overline{X^o},X^o_i\rangle +\sum_{i=1}^NE\bigl|X^o_i\bigl|^2=-NE\bigl|\overline{X^o}\bigr|^2 + \sum_{i=1}^NE\bigl|X^o_i\bigl|^2\nonumber\\
&= -NE\bigl|\overline{X^o}\bigr|^2 + \sum_{i=1}^NE\bigl|X_i\bigl|^2 = \inf_{\shortstack{\scriptsize $ (X_1,\ldots,X_N)$\\ \scriptsize
$\quad\quad\in\Pi({P}_1,\ldots,{P}_N)$ }} -NE\bigl |\overline X \bigr|^2 + \sum_{i=1}^NE\bigl|X_i\bigl|^2\nonumber \\
&=\inf_{\shortstack{\scriptsize $ (X_1,\ldots,X_N)$\\ \scriptsize
$\quad\quad\in\Pi({P}_1,\ldots,{P}_N)$ }} \sum_{i=1}^NE\bigl|\overline{X}\bigr|^2-2E\sum_{i=1}^N\langle \overline{X},X_i\rangle +\sum_{i=1}^NE\bigl|X_i\bigl|^2\nonumber\\
&=\inf_{\shortstack{\scriptsize $ (X_1,\ldots,X_N)$\\ \scriptsize
$\quad\quad\in\Pi(P_1,\ldots,P_N)$ }}  \sum_{i=1}^N E\bigl | \overline X- X_i \bigr|^2.
\end{align}
Also, note that
\begin{equation}\label{smallThing}E|\overline{ X^o}-X^o_i|^2\geq \inf_{ (Y,X) \in\Pi(\mathcal L\overline{ X^o},P_i)}
  E|Y-X|^2=W_2(\mathcal L\overline{ X^o},P_i)^2.\end{equation}
Combining $(\ref{hugeThing})$ and $(\ref{smallThing})$, we get
\begin{equation}\label{optdist}
\frac{1}{N}\sum_{i=1}^N E |\overline X-X_i|^2 \geq \frac{1}{N}\sum_{i=1}^N E|\overline{ X^o}-X^o_i|^2 \geq  \frac{1}{N}\sum_{i=1}^N W_2(\mathcal L\overline{ X^o},P_i)^2.
\end{equation}
Further we have a minorant for the right hand side of (\ref{rew}) as follows
 \begin{align}
 \frac{1}{N}\sum_{i=1}^N E |Y-X_i|^2 &=\frac{1}{N}\sum_{i=1}^N E |Y-\overline{X} + \overline{X}-X_i|^2\nonumber\\
&=\frac{1}{N}\sum_{i=1}^N E |Y-\overline{X}|^2 +\frac{2}{N}E\sum_{i=1}^N\langle Y-\overline{X},\overline{X} - X_i\rangle +\frac{1}{N} \sum_{i=1}^N E|\overline X-X_i|^2 \nonumber\\
&= E |Y-\overline{X}|^2 +\frac{2}{N}E\langle Y-\overline{X},\sum_{i=1}^N(\overline{X} - X_i)\rangle +\frac{1}{N}\sum_{i=1}^NE|\overline X-X_i|^2\nonumber\\
&=E |Y-\overline{X}|^2 + \frac{1}{N}\sum_{i=1}^N E|\overline X-X_i|^2 \geq \frac{1}{N}\sum_{i=1}^N E|\overline X-X_i|^2.\label{minor}
  \end{align}
 Putting (\ref{rew}), (\ref{optdist}), and (\ref{minor}) together we obtain
\begin{equation}
\frac{1}{N} \sum_{i=1}^N W_2(P,P_i)^2 = \frac{1}{N}\sum_{i=1}^N E |Y-X_i|^2 \geq \frac{1}{N}\sum_{i=1}^N E|\overline X-X_i|^2\geq \frac{1}{N}\sum_{i=1}^N W_2(\mathcal L\overline{ X^o},P_i)^2.
\end{equation}
This shows that $\mathcal L\overline{ X^o}$ is a minimizer of our problem and hence a barycenter, proving part {\em \ref{prop_i})}.

Finally, to prove part {\em \ref{prop_iii})}, note that if $P\in \mathcal P^2(\Bbb R^d)$ and $\supp(P)\nsubseteq S$, then any coupling $(Y, X_1,\ldots, X_N)\in\Pi(P,P_1,\ldots,P_N)$
must satisfy $E|Y-\overline{X}|^2>0$ (since  $\supp(\overline{X})\subseteq S$ and $\supp(P)\nsubseteq S$). This implies, by the last line of (\ref{minor}), that
\begin{equation}\frac{1}{N}\sum_{i=1}^N E |Y-X_i|^2 > \frac{1}{N}\sum_{i=1}^N E|\overline X-X_i|^2,\end{equation}
and hence that
\begin{equation}\frac{1}{N} \sum_{i=1}^N W_2(P,P_i)^2 = \frac{1}{N}\sum_{i=1}^N E |Y-X_i|^2 >\frac{1}{N} \sum_{i=1}^N E|\overline X-X_i|^2\geq \frac{1}{N}\sum_{i=1}^N W_2(\mathcal L\overline{ X^o},P_i)^2,\end{equation}
so that $P$ is not a barycenter. Therefore for any barycenter $\bar{P}$, we must have $\supp(\bar{P})\subseteq S$, which proves part {\em \ref{prop_iii})}. \qed
\end{proof}

\subsection{\bf Linear Programming and Optimal Transport}

Let us now develop a linear programming model (LP) for the exact computation of a discrete barycenter. Suppose we have a set of discrete measures $P_i$, $i=1,\ldots,N$, and additionally another discrete measure $P$. Let $S_0=|\supp(P)|$ and $S_i=|\supp(P_i)|$ for each $i$ as before. Let $x_{j}$, $j=1,\ldots,S_0$ be the points in the support of $P$, each with mass $d_{j}$, and let $x_{ik}$, $k=1,\ldots,S_i$ be the points in the support of $P_i$, each with mass $d_{ik}$. For the sake of a simple notation in the following, when summing over these values, the indices take the full range unless stated otherwise.

If $(X,Y_i)\in\Pi(P,P_i)$, then this coupling can be viewed as a finite matrix, since both probability measures are discrete. We define $y_{ijk}\geq 0$ to be the value of the entry corresponding to the margins $x_j$ and $x_{ik}$ in this finite matrix.

Note in this coupling that $\sum_k y_{ijk} = d_j$ for all $j$ and that $\sum_j y_{ijk} = d_{ik}$ for all $k$ and further that
\begin{equation}\label{distcost}
E|X-Y|^2 = \sum_{j,k} |x_j - x_{ik}|^2\cdot y_{ijk} = \sum_{j,k} c_{ijk}\cdot y_{ijk},
\end{equation}
where $c_{ijk}:= |x_j - x_{ik}|^2$ just by definition.

Given a non-negative vector ${\bf y}= (y_{ijk})\geq 0$ that satisfies $\sum_k y_{ijk} = d_j$ for all $i$ and $j$ and $\sum_j y_{ijk} = d_{ik}$ for all $i$ and $k$, we call ${\bf y}$ an \emph{N-star transport} between $P$ and the $P_i$. We define the \emph{cost} of this transport to be $c({\bf y}):=\sum_{i,j,k} c_{ijk}\cdot y_{ijk}$.

Further there exist vectors $(X^*,Y_i^*)\in\Pi(P,P_i)$ for all $i$, and a corresponding $N$-star transport ${\bf y^*}$, such that
\begin{equation}
\sum_{i} W_2(P,P_i)^2 = \sum_i E|X^*-Y^*|^2 = c({\bf y^*}).
\end{equation}
For any $(X,Y_i)\in\Pi(P,P_i)$ we also have $E|X^*-Y_i^*|^2\leq E|X-Y_i|^2$, and hence it is easily seen that ${\bf y^*}$ is an optimizer to the following linear program
\begin{align}\label{nstar}
\min_{\bf y}  &\spc\spc c({\bf y}) \nonumber \\
\sum_k y_{ijk}  = &\spc\spc d_j,  \spc\spc\forall i=1,\ldots,N,\spc\forall j=1,\ldots,S_0,\nonumber\\
\sum_{j} y_{ijk}  = &\spc\spc d_{ik}, \spc\spc\forall i=1,\ldots,N,\spc\forall k=1,\ldots,S_i,\\
y_{ijk}  \geq & \spc\spc0, \spc\spc\spc\spc\spc\forall i=1,\ldots,N,\spc\forall j=1,\ldots,S_0,\spc\forall k=1,\ldots,S_i.\nonumber
\end{align}

Now suppose we wish to find a barycenter using a linear program. Then using Proposition \ref{existdiscrete} we know that this amounts to finding a solution to
\begin{equation}
\min_{P\in\mathcal{P}_{\hspace*{-0.05cm}\mathcal{S}}^2(\Bbb R^d)}\sum_{i=1}^N W_2( P, P_i)^2,\hspace{0.5in}P =\sum_{{\bf x}\in S}z_{\bf x}\delta_{\bf x},\hspace{0.2in}z_{\bf x}\in\R_{\geq 0}.
\end{equation}
Using this we can expand the possible support of $P$ in the previous LP to $S$, and let the mass at each $x_j\in S$ be represented by a variable $z_j\geq 0$. This is a probability distribution if and only if the constraint $\sum_j z_j =1$ is satisfied. Then every exact barycenter, up to measure-zero sets, must be represented by some assignment of these variables and hence is an optimizer of the LP
\begin{align}
\min_{\bf y,z}  &\spc\spc c({\bf y}) \nonumber \\
\sum_k y_{ijk}  = &\spc\spc z_j, \spc\spc\forall i=1,\ldots,N,\spc\forall j=1,\ldots,S_0,\nonumber\\
\sum_{j} y_{ijk}  = &\spc\spc d_{ik}, \spc\spc\forall i=1,\ldots,N,\spc\forall k=1,\ldots,S_i,\nonumber\\
y_{ijk}  \geq &  \spc\spc0,\spc\spc\spc \spc\spc\forall i=1,\ldots,N,\spc\forall j=1,\ldots,S_0,\spc\forall k=1,\ldots,S_i,\nonumber\\
z_j \geq &  \spc\spc0,\spc\spc\spc \spc\spc\forall j=1,\ldots,S_0.\label{LPequ}
\end{align}
Since each $P_i$ is a probability distribution it is easy to see that $\sum_j z_j =1$ is just a consequence of satisfaction of the other constraints. Any optimizer $({\bf y^*},{\bf z^*})$ to this LP is a barycenter $\bar{P}$ in that
\begin{equation}
\min_{P\in\mathcal{P}_{\hspace*{-0.05cm}\mathcal{S}}^2(\Bbb R^d)}\sum_{i=1}^N W_2( P, P_i)^2=\sum_{i=1}^N W_2(\bar{P}, P_i)^2=c({\bf y^*})\hspace{0.3in}\text{ and }\hspace{0.3in}\bar{P} =\sum_{j}z^*_j\delta_{x_j}.
\end{equation}

It is notable that the LP in $(\ref{LPequ})$ corresponds to $N$ transportation problems, linked together with variables $z_j$, representing a common marginal for each transportation problem. In fact it is not hard to show that in the case $N=2$ this LP can be replaced with a network flow LP on a directed graph. It is easily seen that this LP is both bounded (it is a minimization of a positive linear sum of non-negative variables) and feasible (assign an arbitary $z_j=1$ and the remainder of them $0$ and this reduces to solving $N$ transportation LPs). Thus it becomes useful to write down the dual LP, which also bares similarity to a dual transportation problem
\begin{align}
\max_{\bf \tau,\theta}  &\spc\spc \sum_{i,k}d_{ik}\cdot \tau_{ik}\nonumber \\
\theta_{ij} + \tau_{ik} \leq&\spc\spc c_{ijk},\spc\spc\forall i=1,\ldots,N,\spc\forall j=1,\ldots,S_0,\spc\forall k=1,\ldots,S_i,\nonumber\\
\sum_{j} \theta_{ij}  \geq &\spc\spc 0, \spc\spc\spc\spc\spc\forall i=1,\ldots,N,\spc\forall j=1,\ldots,S_0,\label{dualLP}
\end{align}
where there is a variable $\tau_{ik}$ for each defining measure $i$ and each $x_{ik}\in\supp(P_i)$ and a variable $\theta_{ij}$ for each defining measure $i$ and each $x_j\in S$.

These LPs not only will be used for computations in Section \ref{computations}, but also can be used to develop the necessary theory for Theorem \ref{nomasssplit}.

\begin{lemma}\label{nosplitlem} Let $P_1,\ldots,P_N$ be discrete probability measures with a barycenter $\bar{P}$ given by a solution $({\bf y^*},{\bf z^*})$  to (\ref{LPequ}). Then
\begin{enumerate}[i)]
\item\label{nosplitlem_i} For any $x_j\in\supp(\bar{P})$ (i.e. $z_j^*>0$) combined with any choice of $x_{ik_i}\in\supp(P_i)$ for $i=1,\ldots,N$ such that $y^*_{ijk_i}>0$ for each $i$, one then has $x_j=\frac{1}{N}\sum_i x_{ik_i}$.
\item\label{nosplitlem_ii} For any $x_j\in\supp(\bar{P})$ and $i=1,\ldots,N$, one has $\big|\;\{y^*_{ijk}>0|\spc x_{ik}\in\supp(P_i)\}\;\big| = 1$.
\end{enumerate}
\end{lemma}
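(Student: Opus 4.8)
The plan is to prove part \ref{nosplitlem_i}) by a direct \emph{rerouting} argument against the optimality of $({\bf y^*},{\bf z^*})$, and then to deduce part \ref{nosplitlem_ii}) as an immediate corollary of part \ref{nosplitlem_i}). Throughout I would work directly with the primal LP (\ref{LPequ}); although the statement can also be read off from complementary slackness with the dual (\ref{dualLP}), the perturbation argument is cleaner and self-contained.

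For part \ref{nosplitlem_i}), I would argue by contradiction: suppose $z_j^*>0$ and for each $i$ there is an index $k_i$ with $y^*_{ijk_i}>0$, yet $x_j\neq\bar{x}$, where $\bar{x}:=\frac{1}{N}\sum_i x_{ik_i}$. The point $\bar{x}$ is the centroid of one support point taken from each $P_i$, so by the definition (\ref{centers}) we have $\bar{x}\in S$; let $\bar{\jmath}$ be its index (with $\bar{\jmath}\neq j$ since $\bar{x}\neq x_j$), so that $z_{\bar\jmath}$ and the $y_{i\bar\jmath k}$ are genuine variables of the LP. I would then build a competing feasible point by shifting a small mass $\epsilon:=\min_i y^*_{ijk_i}>0$ off $x_j$ onto $\bar{x}$: for each $i$ decrease $y_{ijk_i}$ by $\epsilon$ and increase $y_{i\bar\jmath k_i}$ by $\epsilon$, while simultaneously decreasing $z_j$ by $\epsilon$ and increasing $z_{\bar\jmath}$ by $\epsilon$.

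The step I expect to require the most care is verifying that this construction is legal, i.e. that all constraints of (\ref{LPequ}) survive. The $(i,j)$- and $(i,\bar\jmath)$-row constraints $\sum_k y_{ijk}=z_j$ stay balanced because the change in each $y$-row is matched by the change in the corresponding $z$; each column constraint $\sum_j y_{ijk_i}=d_{ik_i}$ is preserved because the $\epsilon$ removed from the $j$-entry re-enters at the $\bar\jmath$-entry; and nonnegativity holds since $\epsilon\le y^*_{ijk_i}$ for every $i$ and $\epsilon\le z_j^*$ (as $y^*_{ijk_i}\le\sum_k y^*_{ijk}=z_j^*$). Once feasibility is settled, the contradiction is immediate: the induced cost change is $\Delta c=\epsilon\sum_i\bigl(|\bar{x}-x_{ik_i}|^2-|x_j-x_{ik_i}|^2\bigr)$, and the elementary identity $\sum_i|y-x_{ik_i}|^2=N|y-\bar{x}|^2+C$, with $C$ independent of $y$, shows that the centroid $\bar{x}$ is the unique minimizer of $y\mapsto\sum_i|y-x_{ik_i}|^2$. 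Hence $\Delta c=-N\epsilon\,|x_j-\bar{x}|^2<0$, contradicting optimality; therefore $x_j=\bar{x}$.

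For part \ref{nosplitlem_ii}), first note that $z_j^*>0$ forces $\sum_k y^*_{ijk}=z_j^*>0$, so for each $i$ at least one active entry exists. To rule out two, suppose for some fixed $i$ there are distinct $k,k'$ with $y^*_{ijk}>0$, $y^*_{ijk'}>0$ and $x_{ik}\neq x_{ik'}$. Fixing any active indices $k_l$ for $l\neq i$, part \ref{nosplitlem_i}) applied to the two complete selections gives $x_j=\frac{1}{N}(x_{ik}+\sum_{l\neq i}x_{lk_l})$ and $x_j=\frac{1}{N}(x_{ik'}+\sum_{l\neq i}x_{lk_l})$; subtracting yields $x_{ik}=x_{ik'}$, a contradiction. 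Thus exactly one active entry occurs for each $i$.
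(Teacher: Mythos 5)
Your proposal is correct and follows essentially the same route as the paper's proof: the identical rerouting perturbation (shift $\epsilon=\min_i y^*_{ijk_i}$ of mass from $x_j$ to the centroid $\bar{x}\in S$, adjusting both ${\bf y}$ and ${\bf z}$) contradicting optimality for part \emph{i)}, and part \emph{ii)} deduced from part \emph{i)} by comparing two complete selections of active indices. Your write-up is in fact slightly more explicit than the paper's, which leaves feasibility as ``easily checked'' and asserts the strict cost inequality without the identity $\sum_i|y-x_{ik_i}|^2=N|y-\bar{x}|^2+C$ that quantifies the decrease as $-N\epsilon|x_j-\bar{x}|^2$.
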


\begin{proof} \emph{i)} Suppose the statement in \emph{i)} is false. Then there exists an $x_{j_0}\in\supp(\bar{P})$ and there are points $x_{ik_i}\in\supp(P_i)$ for $i=1,\ldots,N$ such that $y^*_{ij_0k_i}>0$ for each $i$ and $x_{j_0}\neq\frac{1}{N}\sum_i x_{ik_i}$.

Let $\alpha = \min_i y^*_{ij_0k_i} >0$ and let $x_{j^*} = \frac{1}{N}\sum_i x_{ik_i}$. Then define $({\bf \hat{y}},{\bf \hat{z}})$ such that $\hat{y}_{ij_0k_i} = y^*_{ij_0k_i} - \alpha$ for each $i$, $\hat{y}_{ij^*k_i} = y^*_{ij^*k_i} + \alpha$ for each $i$, $\hat{z}_{j_0} = z^*_{j_0} - \alpha$, $\hat{z}_{j^*} = z^*_{j^*} + \alpha$, and $\hat{z}_j = z^*_j$ and $\hat{y}_{ijk}=y^*_{ijk}$ for all other variables.

It is easily checked that $({\bf \hat{y}},{\bf \hat{z}})$ is also a feasible solution to (\ref{LPequ}). Further
\begin{equation}
c({\bf \hat{y}}) = c({\bf y^*}) + \alpha\left( \sum_i c_{ij^*k_i} - \sum_i c_{ij_0k_i}\right)<c({\bf y^*}),
\end{equation}
where the strict inequality follows since $x_{j_0}\neq\frac{1}{N}\sum_i x_{ik_i}=x_{j^*}$ and therefore
\begin{equation}
\sum_i c_{ij_0k_i} = \sum_i |x_{j_0} - x_{ik_i}|^2 > \sum_i |x_{j^*} - x_{ik_i}|^2 =  \sum_i c_{ij^*k_i},
\end{equation}
which is a contradiction with $\bar{P}$ being a barycenter.

\emph{ii)} If $x_j\in\supp(\bar{P})$, then $z^*_j>0$ and therefore $\big|\;\{y^*_{ijk}>0|\spc x_{ik}\in\supp(P_i)\}\;\big| \geq 1$ for all $i$ is an immediate consequence of the contraints in (\ref{LPequ}). Suppose rhe statement is false, then there is some $x_j\in\supp(\bar{P})$ such that, without loss of generality, $\big|\{y^*_{1jk}>0|\spc x_{1k}\in\supp(P_1)\}\big| \geq 2$. Then we can choose $x_{1k'}\neq x_{1k''}$ such that $y^*_{1jk'},y^*_{1jk''}>0$ and further can choose $x_{ik_i}$ for $i=2,\ldots,N$ such that $y_{ijk_i}>0$ for each $i$. Then this implies, by part $(i)$, that
\begin{equation}
\frac{1}{N}\bigl(x_{1k'} + \sum_{i=2}^Nx_{ik_i}\bigr) = x_j = \frac{1}{N}\bigl(x_{1k''} + \sum_{i=2}^Nx_{ik_i}\bigr),
\end{equation}
which in turn immediately would imply $x_{1k'} = x_{1k''}$; a contradiction with our choice of $x_{1k'}\neq x_{1k''}$. Hence $\big|\;\{y^*_{1jk}>0|\spc x_{ik}\in\supp(P_1)\}\;\big|=1$.  \qed
\end{proof}

Lemma \ref{nosplitlem} already implies that there exists a transport from any barycenter $\bar{P}$ to each $P_i$. However, to prove Theorem \ref{nomasssplit} we need the concept of {\em strict complimentary slackness}. If you have a primal LP $\{\min {\bf c}^T{\bf x}|\spc {\bf A}{\bf x}={\bf b},\spc {\bf x}\geq{\bf 0}\}$ which is bounded and feasible and its dual LP $\{\max {\bf b}^T{\bf y}|\spc {\bf A}^T{\bf y}\leq{\bf c}\}$, then complimentary slackness states that the tuple $({\bf x^*},{\bf y^*})$ gives optimizers for both of these problems if and only if $x^*_i(c_i-{\bf a_i}^T{\bf y^*}) = 0$ for all $i$, where ${\bf a_i}$ is the $i$-th column of ${\bf A}$. This statement can be strengthened in form of the strict complimentary slackness condition \cite{z-94}:

\begin{proposition}\label{strictcomp} Given a primal LP $\{\min {\bf c}^T{\bf x}|\spc {\bf A}{\bf x}={\bf b},\spc {\bf x}\geq{\bf 0}\}$ and the corresponding dual LP\\  ${\{\max {\bf b}^T{\bf y}|\spc {\bf A}^T{\bf y} \leq{\bf c}\}}$, both bounded and feasible, there exists a tuple of optimal solutions $({\bf x^*},{\bf y^*})$, to the primal and dual respectively, such that for all $i$
\begin{equation}
x^*_i(c_i-{\bf a_i}^T{\bf y^*}) = 0, \hspace{1in} x^*_i + (c_i-{\bf a_i}^T{\bf y^*})  >0.
\end{equation}
\end{proposition}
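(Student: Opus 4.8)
The plan is to prove this by the standard route: strong linear programming duality to get ordinary complementary slackness, a partition-of-indices lemma for the ``strict'' part, and a convexity/averaging step to combine everything into a single pair. This is essentially the Goldman--Tucker theorem underlying \cite{z-94}.

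First I would invoke strong duality. Since the primal $\{\min \mathbf{c}^T\mathbf{x} : \mathbf{A}\mathbf{x} = \mathbf{b},\ \mathbf{x}\ge\mathbf{0}\}$ and its dual are both feasible and bounded, optimal solutions exist with a common optimal value $v^*$. For \emph{any} optimal primal $\mathbf{x}$ and optimal dual $\mathbf{y}$, writing $s_i := c_i - \mathbf{a}_i^T\mathbf{y}\ge 0$ for the dual slacks, we have
\[
\sum_i x_i s_i = \mathbf{c}^T\mathbf{x} - (\mathbf{A}\mathbf{x})^T\mathbf{y} = \mathbf{c}^T\mathbf{x} - \mathbf{b}^T\mathbf{y} = 0,
\]
and since every summand is nonnegative, $x_i s_i = 0$ for all $i$. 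Thus ordinary complementary slackness holds for every optimal pair; the only thing left to arrange is the strict inequality $x_i^* + s_i^* > 0$ for all $i$ simultaneously.

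The heart of the proof is the following claim: for each index $i$, either some optimal primal solution has $x_i>0$ or some optimal dual solution has $s_i>0$. To prove it I would fix $i$ and form the auxiliary LP $(R_i): \max x_i$ over the primal optimal face, encoded as $\{\mathbf{A}\mathbf{x}=\mathbf{b},\ \mathbf{c}^T\mathbf{x}\le v^*,\ \mathbf{x}\ge\mathbf{0}\}$ (the cost constraint together with feasibility forces $\mathbf{c}^T\mathbf{x}=v^*$, i.e. primal optimality). If the value of $(R_i)$ is positive, then such an optimal primal solution exists and we are done. If it is $0$, then $(R_i)$ is feasible and bounded, so strong duality applied to $(R_i)$ produces a certificate $(\mathbf{u},t)$ with $t\ge 0$, $\mathbf{A}^T\mathbf{u} + t\mathbf{c}\ge \mathbf{e}_i$, and $\mathbf{b}^T\mathbf{u}+v^* t=0$. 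A short case analysis on this certificate finishes the claim: when $t>0$, the rescaled point $\mathbf{y}=-\mathbf{u}/t$ is dual optimal with $s_i\ge 1/t>0$; when $t=0$, the vector $-\mathbf{u}$ is a dual-optimal direction, and adding a small positive multiple of it to any optimal dual solution keeps it optimal (since $\mathbf{b}^T\mathbf{u}=0$) while strictly increasing the slack in coordinate $i$ (since $\mathbf{A}^T\mathbf{u}\ge\mathbf{e}_i\ge\mathbf{0}$). This case analysis is the step I expect to be the main obstacle, as it is the one place that needs a genuine duality argument rather than bookkeeping.

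Finally I would combine the witnesses by convexity. The optimal primal solutions form a convex set, so averaging finitely many of them --- one for each index $i$ for which the claim gives $x_i>0$ --- yields a single optimal $\mathbf{x}^*$ that is strictly positive in every such coordinate, because a coordinate positive in one summand and nonnegative in all others is positive in the average. Symmetrically, averaging the dual witnesses gives an optimal $\mathbf{y}^*$ with $s_i^* := c_i - \mathbf{a}_i^T\mathbf{y}^* > 0$ in every coordinate covered by the dual alternative. By the claim every index is covered by at least one alternative, so $x_i^* + (c_i - \mathbf{a}_i^T\mathbf{y}^*)>0$ for all $i$; and since $\mathbf{x}^*$ and $\mathbf{y}^*$ are both optimal, the complementary slackness of the first step gives $x_i^*(c_i - \mathbf{a}_i^T\mathbf{y}^*)=0$. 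Together these are exactly the two asserted conditions.
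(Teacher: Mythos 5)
Your argument is correct, but there is nothing in the paper to compare it against: Proposition \ref{strictcomp} is stated as a known result --- the Goldman--Tucker strict complementarity theorem --- with a citation to \cite{z-94}, and is then used as a black box in the proof of Theorem \ref{nomasssplit}; the paper supplies no proof of its own. What you have reconstructed is the standard proof of that theorem, and all three steps check out. In step one, $\sum_i x_i s_i = \mathbf{c}^T\mathbf{x} - \mathbf{b}^T\mathbf{y} = 0$ with nonnegative summands is exactly right, and it also shows (though you leave it implicit) that your two alternatives are mutually exclusive, which is what guarantees that the averaged pair still satisfies $x_i^*\bigl(c_i - \mathbf{a}_i^T\mathbf{y}^*\bigr) = 0$ at the end. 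In step two, the dual of $(R_i)$ is correctly identified (free multiplier $\mathbf{u}$ for $\mathbf{A}\mathbf{x}=\mathbf{b}$, multiplier $t\ge 0$ for $\mathbf{c}^T\mathbf{x}\le v^*$), and strong duality applies since $(R_i)$ is feasible with finite value $0$; in the case $t>0$ the point $\mathbf{y}=-\mathbf{u}/t$ satisfies $\mathbf{c}-\mathbf{A}^T\mathbf{y}\ge \mathbf{e}_i/t\ge\mathbf{0}$ and $\mathbf{b}^T\mathbf{y}=v^*$, hence is dual optimal with slack $s_i\ge 1/t>0$; in the case $t=0$ the point $\mathbf{y}_0-\epsilon\mathbf{u}$ remains feasible for \emph{every} $\epsilon>0$ (not merely small $\epsilon$, since $\mathbf{A}^T\mathbf{u}\ge\mathbf{e}_i\ge\mathbf{0}$) and optimal because $\mathbf{b}^T\mathbf{u}=0$, with slack at least $\epsilon$ in coordinate $i$; and the required $\mathbf{y}_0$ exists by the standing hypothesis that both LPs are feasible and bounded. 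One small point worth making explicit: if $(R_i)$ happens to be unbounded, your conclusion still holds, since any feasible point of $(R_i)$ with $x_i>0$ is primal optimal, so the dichotomy ``positive value versus value zero'' covers everything. Step three is also sound: both optimal faces are convex, the slack map $\mathbf{y}\mapsto\mathbf{c}-\mathbf{A}^T\mathbf{y}$ is affine, and a positive-weight convex combination of nonnegative numbers, at least one of which is positive, is positive; just add the trivial remark that if one of your two index sets is empty you take an arbitrary optimal solution on that side. With those cosmetic additions the proof is complete and is, as you suspected, essentially the argument behind the reference the paper cites.
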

With these tools, we are now ready to prove Theorem  \ref{nomasssplit}.

\begin{proof}[{Proof of Theorem \ref{nomasssplit}}] Let $({\bf y^*},{\bf z^*},{\bf \tau^*},{\bf \theta^*})$ be a solution to (\ref{LPequ}) and (\ref{dualLP}), as guaranteed by Proposition \ref{strictcomp}. Let $\bar{P}$ be a barycenter corresponding to the solution $(\hat{\bf y},\hat{\bf z})$. For each $x_j\in\supp(\bar{P})$ let $x_{ik_j}\in\supp(P_i)$ be the unique location such that $\hat{y}_{ijk_j}>0$ as guaranteed by Lemma \ref{nosplitlem} part {\em \ref{nosplitlem_ii})}.
Now for each $i$ define
\begin{equation}
\psi_i(x) = \max_{x_{ik}\in\supp(P_i)}\langle x,x_{ik}\rangle - \frac{1}{2}|x_{ik}|^2 + \frac{1}{2}\tau^*_{ik}.
\end{equation}
Using Lemma \ref{nosplitlem} part {\em \ref{nosplitlem_i})}, it is easy to see that for proving part {\em \ref{nomasssplit_i})}-{\em \ref{nomasssplit_iii})} of  Theorem \ref{nomasssplit} it suffices to show that for each $\psi_i$ we have that $\nabla\psi_i(x_j)=x_{ik_j}$ for each $x_j\in\supp(\bar P)$.

By definition, each $\psi_i$ is convex (as the maximum over a set of linear functions) and $\psi_i(x)$ is finite for all $x\in \mathbb{R}^d$. Further \begin{align}
|x|^2 - 2\psi_i(x)& =  |x|^2 - 2\max_{x_{ik}\in\supp(P_i)}\langle x,x_{ik}\rangle - \frac{1}{2}|x_{ik}|^2 + \frac{1}{2}\tau^*_{ik}\nonumber\\
& = \min_{x_{ik}\in\supp(P_i)} |x|^2 -2\langle x,x_{ik}\rangle +|x_{ik}|^2 - \tau^*_{ik}\nonumber\\
& = \min_{x_{ik}\in\supp(P_i)} |x-x_{ik}|^2 - \tau^*_{ik},
\end{align}
and hence
\begin{equation}
|x_j|^2 - 2\psi_i(x_j)=\min_{x_{ik}\in\supp(P_i)} |x_j-x_{ik}|^2 - \tau^*_{ik}=\min_{x_{ik}\in\supp(P_i)} c_{ijk} - \tau^*_{ik}.
\end{equation}
By complimentary slackness, we have that since $\hat{y}_{ijk_j}\neq 0$, that $c_{ijk_j} -\tau^*_{ik_j}-\theta^*_{ij}=0$. Therefore by strict complimentary slackness we get  $y^*_{ijk_j}\neq 0$ and hence by Lemma \ref{nosplitlem} part {\em \ref{nosplitlem_ii})} we get $y^*_{ijk}=0$ for all $k\neq k_j$. This implies by strict complimentary slackness that for all $k\neq k_j$ we obtain $c_{ijk} -\tau^*_{ik}-\theta^*_{ij}\neq 0$ and therefore, by feasibility, that $c_{ijk}-\tau^*_{ik}<\theta^*_{ij}$. Factoring in that $c_{ijk_j} -\tau^*_{ik_j} = \theta^*_{ij}$ by complimentary slackness we have that $|x_j|^2 - 2\psi_i(x_j) = \theta^*_{ij}$. Further, since the function corresponding to $k_j$ is the only continuous function in the minimization that achieves this minimum at $x_j$ (by the above argument), we obtain that for $x$ in some neighborhood of $x_j$
\begin{align}
\;\; & |x|^2 - 2\psi_i(x) = |x-x_{ik_j}|^2 - \tau^*_{ik_j},\nonumber\\
\Rightarrow\;\; & \psi_i(x)  =\langle x,x_{ik_j}\rangle - \frac{1}{2}|x_{ik_j}|^2 + \frac{1}{2}\tau^*_{ik_j},\nonumber\\
\Rightarrow\;\; & \nabla\psi_i(x) = x_{ik_j},\\
\end{align}
so that $\nabla\psi_i(x_j)=x_{ik_j}$. Further, note that complimentary slackness implies $\sum_i\theta^*_{ij} = 0$ for each $x_j\in\supp(\bar{P})$ and hence
\begin{equation}
0 = \sum_i\theta^*_{ij} = \sum_i |x_j|^2 - 2\psi_i(x_j)
\end{equation}
\begin{equation}
\Rightarrow\frac{1}{N} \sum_i \psi_i(x_j) = \frac{|x_j|^2}{2}.
\end{equation}
This shows part {\em \ref{nomasssplit_iv})} of Theorem \ref{nomasssplit} and thus completes the proof.  \qed
\end{proof}

\subsection{\bf Sparsity and Transportation Schemes}

As before, let $P_1,\ldots,P_N$ be discrete probability measures, with point masses $d_{ik}$ for $x_{ik}\in \supp(P_i)$ defined as in the previous subsection. Then for any set $\mathcal{S}\subseteq S\times \supp(P_1)\times\ldots\times\supp(P_N)$ we fix an arbitary order on $\mathcal{S}$, i.e. $\mathcal{S}=\{s_1,s_2,\dots,s_m\}$ where each $s_h=(q_{h0},q_{h1},\dots,q_{hN})$, and define a \emph{location-fixed transportation scheme} as the set
\begin{equation}
\mathcal{T}(\mathcal{S}) := \{{\bf w}\in\R_{\geq 0}^{|\mathcal{S}|}| \sum_{\substack{h=1,\\ q_{hi}=x_{ik}}}^m w_h = d_{ik},\spc\forall i=1,\dots,N,\spc\forall k=1,\dots,S_i\}.
\end{equation}
Informally, the coefficients of ${\bf w}\in\mathcal{T}(\mathcal{S})$ correspond to an amount of transported mass from a given location in $S$ to combinations of support points in the $P_i$, where each of these support points receives the correct total amount. Given a ${\bf w}$, we define its corresponding discrete probability measure
\begin{equation}
P({\bf w},\mathcal{S}):=\sum_{h=1}^m w_h\delta_{q_{h0}},
\end{equation}
and the cost of this pair $({\bf w},\mathcal{S})$
\begin{equation}
c({\bf w},\mathcal{S}):=\sum_{h=1}^m c_hw_h,\spc\spc\spc c_h=\sum_{i=1}^N|q_{h0}-q_{hi}|^2.
\end{equation}
In the following, let $\supp({\bf w})$ denote the set of strictly positive entries of {\bf w}. Informally, we now give a translation between $N$-star transports, the feasible region of $(\ref{nstar})$, and location-fixed transportation schemes.

\begin{lemma}\label{scheme} Given $\mathcal{S}\subseteq S\times \supp(P_1)\times\ldots\times\supp(P_N)$ such that $\mathcal{T}(\mathcal{S})\neq\varnothing$:
\begin{enumerate}[i)]
\item\label{scheme_i} For each ${\bf w}\in\mathcal{T}(\mathcal{S})$, $P({\bf w},\mathcal{S})$ is a probability measure with $|\supp(P({\bf w},\mathcal{S}))|\leq|\supp({\bf w})|$.
\item\label{scheme_ii} For each ${\bf w}\in\mathcal{T}(\mathcal{S})$, there exists an $N$-star transport ${\bf y}$ between $P({\bf w},\mathcal{S})$ and $P_1,\ldots,P_N$ such that $c({\bf w},\mathcal{S})=c({\bf y})$.
\item\label{scheme_iii}  For every discrete probability measure $P$ supported on $S$ and $N$-star transport ${\bf y}$ between $P$ and $P_1,\ldots,P_N$ there exists a pair $({\bf w},\mathcal{S'})$ such that: ${\bf w}\in\mathcal{T}(\mathcal{S'})$, $P=P({\bf w},\mathcal{S'})$, and $c({\bf w},\mathcal{S'})=c({\bf y})$.
\end{enumerate}
\end{lemma}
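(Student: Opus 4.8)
The plan is to treat the two directions separately. Parts \emph{i)} and \emph{ii)} pass from a scheme to a measure and an $N$-star transport, and amount to direct bookkeeping; part \emph{iii)} is the reverse passage and is where the real work lies, since it requires gluing the $N$ separate couplings encoded by an $N$-star transport into a single joint scheme.

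For part \emph{i)}, I would first check that $P({\bf w},\mathcal{S})$ has total mass one. Fixing any single $i$ and summing the defining constraints $\sum_{h:\spc q_{hi}=x_{ik}}w_h=d_{ik}$ over $k=1,\ldots,S_i$, each index $h$ is counted exactly once, because its $i$-th coordinate $q_{hi}$ equals precisely one support point $x_{ik}$; hence $\sum_h w_h=\sum_k d_{ik}=1$. The support bound is immediate: the support of $P({\bf w},\mathcal{S})$ is the set of distinct locations $q_{h0}$ carrying positive mass, and since several indices $h$ may share a common $q_{h0}$, the number of such locations is at most $|\supp({\bf w})|$. For part \emph{ii)}, I would build the transport by aggregating over the fixed combinations, writing $z_j:=\sum_{h:\spc q_{h0}=x_j}w_h$ for the mass of $P({\bf w},\mathcal{S})$ and setting
\[
y_{ijk}:=\sum_{\substack{h:\spc q_{h0}=x_j,\\ q_{hi}=x_{ik}}}w_h.
\]
Then $\sum_k y_{ijk}=z_j$ and $\sum_j y_{ijk}=d_{ik}$, so the constraints of (\ref{nstar}) hold; re-indexing the cost sum by $h$ and using $c_h=\sum_i|q_{h0}-q_{hi}|^2$ gives $c({\bf y})=\sum_{i,j,k}c_{ijk}y_{ijk}=\sum_h w_h\sum_i|q_{h0}-q_{hi}|^2=c({\bf w},\mathcal{S})$.

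For part \emph{iii)}, the crux, I would produce the scheme by a conditional-independence (product) coupling. Given $P$ with masses $z_j$ and an $N$-star transport ${\bf y}$, I would take $\mathcal{S}'$ to consist of all tuples $(x_j,x_{1k_1},\ldots,x_{Nk_N})$ and, for each $j$ with $z_j>0$, set
\[
w_{(j,k_1,\ldots,k_N)}:=\frac{\prod_{i=1}^N y_{ijk_i}}{z_j^{\,N-1}},
\]
with $w=0$ whenever $z_j=0$, in which case every $y_{ijk}=0$ as well, so there is no division by zero. The verification rests on the factorization, valid for each fixed $j$ with $z_j>0$,
\[
\sum_{\substack{k_1,\ldots,k_N:\\ k_i=k}}\prod_{i'=1}^N y_{i'jk_{i'}}=y_{ijk}\prod_{i'\neq i}\Big(\sum_{k_{i'}}y_{i'jk_{i'}}\Big)=y_{ijk}\,z_j^{\,N-1}.
\]
Summing $w$ over the tuples whose $i$-th coordinate is $x_{ik}$ then yields $\sum_j y_{ijk}=d_{ik}$, so ${\bf w}\in\mathcal{T}(\mathcal{S}')$; summing $w$ over all combinations at a fixed $j$ yields mass $z_j^{\,N}/z_j^{\,N-1}=z_j$, so $P=P({\bf w},\mathcal{S}')$. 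For the cost, distributing $c_h=\sum_i|x_j-x_{ik_i}|^2$ through the same factorization collapses each product factor $\sum_{k_{i'}}y_{i'jk_{i'}}$ to $z_j$, giving $c({\bf w},\mathcal{S}')=\sum_j\sum_{i,k}c_{ijk}y_{ijk}=c({\bf y})$.

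The main obstacle is precisely this gluing step in \emph{iii)}: an $N$-star transport records $N$ independent couplings of $P$ with the individual $P_i$, whereas a location-fixed scheme demands a single joint assignment of each unit of mass at $x_j$ to a full combination of target points. The product construction conditioned on each location $x_j$ is the natural discrete analogue of the gluing lemma and resolves this, the only technical care being the degenerate locations with $z_j=0$, which I exclude since they carry no transported mass.
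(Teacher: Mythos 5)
Your proof is correct. Parts \emph{i)} and \emph{ii)} coincide with the paper's argument (the same aggregation $y_{ijk}=\sum_{h:\,q_{h0}=x_j,\,q_{hi}=x_{ik}}w_h$ and the same re-indexing of the cost sum), but your part \emph{iii)} takes a genuinely different route. The paper proceeds by induction on $|\supp({\bf y})|$: it peels off $\mu=\min_{y_{ijk}>0}y_{ijk}$ at a location $x_{j^*}$, choosing for each $i$ a target $k_i$ with $y_{ij^*k_i}\geq\mu$, appends the single tuple $(x_{j^*},x_{1k_1},\ldots,x_{Nk_N})$ to the scheme, and applies the induction hypothesis to the reduced measures --- which is why the paper must explicitly extend the statement to measures of common total mass $r\leq 1$. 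Your product construction $w_{(j,k_1,\ldots,k_N)}=\prod_i y_{ijk_i}/z_j^{N-1}$ instead glues the $N$ couplings in one shot by conditional independence given the location $x_j$; this is the discrete analogue of the gluing lemma, and in fact mirrors the conditional-independence coupling the paper itself uses in the proof of Proposition \ref{existdiscrete}, so it fits the paper's toolkit naturally. Your verification is sound, including the degenerate case $z_j=0$ (where all $y_{ijk}=0$, so setting $w=0$ there is consistent with every constraint). What each approach buys: your argument is shorter, avoids induction, and needs no auxiliary statement about sub-probability measures; the paper's greedy peeling, on the other hand, produces an economical scheme with $|\mathcal{S}'|\leq|\supp({\bf y})|$ tuples, whereas yours takes $\mathcal{S}'$ to be the full product set of size up to $|S|\cdot\prod_{i=1}^N S_i$. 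This size difference is immaterial for the downstream use in Theorem \ref{sparsethm}, since the sparsity bound of Lemma \ref{sparsemin} comes from the rank of the $\sum_{i=1}^N S_i$ equality constraints and is independent of $|\mathcal{S}'|$, though the paper's smaller scheme is the more natural object if one ever wanted to compute with it.
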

\begin{proof}
\emph{i)} $|\supp(P({\bf w},\mathcal{S}))|\leq|\supp({\bf w})|$ is clear by definition (note that strictness of this inequality can occur if there exist non-zero $w_h,w_{h'}$ for which $q_{h0}=q_{h'0}$). To see that $P({\bf w},\mathcal{S})$ is a probability measure it suffices to show that $\sum_{h=1}^m w_h = 1$. This holds since for any $i=1,\ldots,N$ we have
\begin{equation}
\sum_{h=1}^m w_h = \sum_{k=1}^{S_i}\sum_{\substack{h,\\ q_{hi}=x_{ik}}} w_h=  \sum_{k=1}^{S_i} d_{ik} = 1,
\end{equation}
since the $P_i$ are probability measures.

\emph{ii)} For each $i=1,\ldots,N$, $j=1,\ldots,|S|$, $k=1,\ldots,S_i$ define
\begin{equation}
y_{ijk}=\sum_{\substack{h=1\\q_{h0} = x_{j} \\q_{hi} = x_{ik}}}^m w_h.
\end{equation}
Clearly $y_{ijk}\geq 0$ and it is easily checked that $\sum_j y_{ijk} = d_{ik}$ for any $i$ and $k$ and that $\sum_k y_{ijk}$ is the mass at location $x_j\in S$ in the measure $P({\bf w},\mathcal{S})$.

Hence ${\bf y}$ is an $N$-star transport between $P({\bf w},\mathcal{S})$ and $P_1,\ldots,P_N$. Further we have
\begin{align}
c({\bf y})&= \sum_{i,j,k} |x_{j}-x_{ik}|^2\cdot  y_{ijk} = \sum_{i,j,k} |x_{j}-x_{ik}|^2\cdot \sum_{\substack{h=1\\q_{h0} = x_{j} \\q_{hi} = x_{ik}}}^m w_h=\sum_{i=1}^N\sum_{j,k}\sum_{\substack{h=1\\q_{h0} = x_{j} \\q_{hi} = x_{ik}}}^m |q_{h0}-q_{hi}|^2\cdot w_h\nonumber\\
&=\sum_{i=1}^N\sum_{h=1}^m|q_{h0}-q_{hi}|^2\cdot w_h=\sum_{h=1}^mc_hw_h=c({\bf w},\mathcal{S}).
\end{align}

\emph{iii)} We note first that all of our arguments up to now not only hold for $P_i$ and $P$ being probability measures, but for any measures with total mass $0\leq r\leq 1$ that is the same for all $P_i$ and $P$. Using this fact we prove this part of the lemma for these types of measures by induction on $|\supp({\bf y})|$.\\

For $|\supp({\bf y})| = 0$, we clearly have that any $\mathcal{S}$ paired with ${\bf w} = {\bf 0}$ satifies the given conditions. So suppose $|\supp({\bf y})|>0$, then let $\mu = \min_{y_{ijk}>0} y_{ijk}$ and let $(i^*,j^*,k^*)$ be a triplet such that $y_{i^*j^*k^*} = \text{arg}\min_{y_{ijk}>0} y_{ijk}$. This implies that $d_{j^*}\geq \mu$ and so for each $i=1,\ldots,N$ there exists a $k_i$ such that $y_{ij^*k_i}\geq\mu$. In particular one can choose $k_{i^*} = k^*$ here. We then have a vector ${\bf y'}$ with $y'_{ij^*k_i} = y_{ij^*k_i}  - \mu$ and $y'_{ijk}=y_{ijk}$ otherwise. Then ${\bf y'}$ is an $N$-star transport for $P'$ to $P'_1,\ldots,P'_N$ where $P'$ is obtained from $P$ by decreasing the mass on $x_{j^*}$ by $\mu$ and each $P'_i$ is obtained from $P_i$ by decreasing the mass on $x_{ik_i}$ by $\mu$. Then $|\supp({\bf y'})|<|\supp({\bf y})|$ since $y'_{i^*j^*k^*} = 0$.

Therefore, by induction hypothesis, there exists a pair $({\bf w},\mathcal{S'})$ such that ${\bf w}\in\mathcal{T}(\mathcal{S'})$, $P'=P({\bf w},\mathcal{S'})$, and $c({\bf w},\mathcal{S'})=c({\bf y'})$ for $P'_1,\ldots,P'_N$. Let now $|\mathcal{S'}|=m$ and let $s_{m+1}=(x_{j^*},x_{1k_1},\ldots,x_{ik_i},\ldots,x_{Nk_N})$ and define $\mathcal{S}=\mathcal{S'}\cup\{s_{m+1}\}$. Then $({\bf w}^T,\mu)\in\mathcal{T}(\mathcal{S})$ and $P=P(({\bf w}^T,\mu),\mathcal{S})$ for $P_1,\ldots,P_N$. Further we have that
\begin{align}
c({\bf y})&= c({\bf y'}) + \sum_{i=1}^N c_{ij^*k_i}\mu = c({\bf y'}) + \sum_{i=1}^N |x_{j^*}-x_{ik_i}|^2\mu\nonumber\\
&=c({\bf w}^T,\mathcal{S'}) + c_{m+1}\mu = c(({\bf w},
\mu),\mathcal{S}),
\end{align}
which completes the proof by induction.  \qed

\end{proof}

We now show the existence of a transportation scheme ${\bf w^*}$ for which $|\supp({\bf w^*})|$ is provably small.

\begin{lemma}\label{sparsemin} Given a location-fixed transportation scheme $\mathcal{T}(\mathcal{S})\neq\varnothing$ for discrete probability measures $P_1,\ldots,P_N$, there exists ${\bf w^*}\in \arg\min_{{\bf w}\in\mathcal{T}(\mathcal{S})}c({\bf w},\mathcal{S})$, such that
\begin{equation}
|\supp({\bf w^*})|\leq \sum_{i=1}^N S_i - N + 1.
\end{equation}
\end{lemma}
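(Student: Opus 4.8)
The plan is to read the minimization of $c(\cdot,\mathcal{S})$ over $\mathcal{T}(\mathcal{S})$ as a linear program in standard equality form and then invoke the basic-feasible-solution theory of linear programming: the optimum of such an LP over a pointed feasible polyhedron is attained at a vertex, and at a vertex the columns indexed by the strictly positive coordinates are linearly independent, so the number of positive coordinates is at most the rank of the constraint matrix. The entire argument then hinges on computing that rank.

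First I would set up the program explicitly. With $\mathbf{w}\in\R_{\geq 0}^{|\mathcal{S}|}$ as the variable vector, the defining equalities of $\mathcal{T}(\mathcal{S})$ assemble into a system $\mathbf{A}\mathbf{w}=\mathbf{b}$ in which $\mathbf{A}$ has one row per pair $(i,k)$ (there are $\sum_{i=1}^N S_i$ of them) and one column per index $h$, with the entry in row $(i,k)$ and column $h$ equal to $1$ if $q_{hi}=x_{ik}$ and $0$ otherwise; the objective is $\sum_h c_h w_h$. This LP is feasible since $\mathcal{T}(\mathcal{S})\neq\varnothing$, and its feasible region is in fact a polytope: for each fixed $i$ every variable $w_h$ occurs in exactly one equality of the form $\sum w_h = d_{ik}\leq 1$, so $0\leq w_h\leq 1$. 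A nonempty polytope has a vertex, and a linear objective attains its minimum at one; call such a vertex optimizer $\mathbf{w}^*$. Then the vertex characterization gives $|\supp(\mathbf{w}^*)|\leq\operatorname{rank}(\mathbf{A})$.

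The key step — and the one I expect to be the only nonroutine part — is the rank count, which is exactly where the probability-measure hypothesis enters. Although $\mathbf{A}$ has $\sum_{i=1}^N S_i$ rows, these are dependent: for each fixed $i$, since every $q_{hi}$ equals precisely one support point $x_{ik}$, summing the rows of block $i$ over $k$ produces the all-ones row vector $\mathbf{1}^T$, and this sum is the same for every $i$. Hence the $N-1$ differences of consecutive block-sums vanish, giving $N-1$ independent linear relations among the rows, so $\operatorname{rank}(\mathbf{A})\leq\sum_{i=1}^N S_i-(N-1)$. At the level of the right-hand side this redundancy reflects the normalization $\sum_k d_{ik}=1$, which makes all $N$ marginal systems encode the single total-mass condition $\sum_h w_h=1$. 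Combining the two displayed bounds yields $|\supp(\mathbf{w}^*)|\leq\sum_{i=1}^N S_i-N+1$, as required.
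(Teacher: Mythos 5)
Your proposal is correct and follows essentially the same route as the paper's proof: formulate $\min_{{\bf w}\in\mathcal{T}(\mathcal{S})}c({\bf w},\mathcal{S})$ as an equality-form LP, take a basic (vertex) optimal solution whose support size is bounded by the rank of the constraint matrix, and count $N-1$ redundant rows using the fact that each block of marginal constraints sums to the all-ones vector. The only cosmetic difference is that you justify the existence of an optimal vertex by noting the feasible region is a polytope (via $0\leq w_h\leq 1$), whereas the paper directly invokes the existence of a basic optimal solution for a feasible, bounded LP; both are standard and equivalent here.
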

\begin{proof} We have that $\min_{{\bf w}\in\mathcal{T}(\mathcal{S})}c({\bf w},\mathcal{S})$ is equivalent to the following LP by definition:
\begin{align}
&\min_{{\bf w}}\;\;  c({\bf w},\mathcal{S})\spc\spc\spc \nonumber\\
\sum_{\substack{h,\\ q_{hi}=x_{ik}}} w_h&= d_{ik},\spc\forall i=1,\ldots,N,\spc\forall k=1,\ldots,S_i,\nonumber\\
w_h&\geq 0,\spc\spc\spc\forall h=1,\ldots,m.
\end{align}
Thus there is a basic solution to this problem ${\bf w^*}\in\mathcal{T}(\mathcal{S})$ such that $|\supp({\bf w^*})|$ is bounded above by the rank of the matrix of the equality constraints in the first line.

Since there are $\sum_i S_i=\sum_i |\supp(P_i)|$ of these equality constraints by definition, it suffices to show that at least $N-1$ of these constraints are redundant. Let ${\bf a}_{ik}$ denote the row corresponding to the equation for some $i$ and $1\leq k\leq S_i$. Note that for a fixed $i$, $\sum_k {\bf a}_{ik}$ yields a vector of all ones, as $w_h$ appears in exactly one equation for each fixed $i$. Hence it is immediate that the row ${\bf a}_{iS_i}$ is redundant for all $i=2,\ldots,N$ since
\begin{equation}
{\bf a}_{iS_i} = {\bf 1} - \sum_{k=1}^{S_i-1} {\bf a}_{ik}=\sum_{k=1}^{S_1}{\bf a}_{1k}- \sum_{k=1}^{S_i-1} {\bf a}_{ik},
\end{equation}
where ${\bf 1}$ is the row vector of all-ones. Hence we get $N-1$ redundant rows.  \qed

\end{proof}

We are now ready to prove Theorem \ref{sparsethm}.

\begin{proof}[{of Theorem \ref{sparsethm}}] Since all barycenters are a solution to (\ref{LPequ}), there exists an $N$-star transport ${\bf y'}$ from some barycenter $\bar P'$ to $P_1,\ldots,P_N$ and $c({\bf y'})=\sum_{i=1}^N W_2(\bar{P'},P_i)^2$. By Lemma \ref{scheme} part {\em \ref{scheme_iii})}, there is some location-fixed transportation scheme $\mathcal{T}(\mathcal{S})$ for $P_1,\ldots,P_N$ and some ${\bf w'}\in\mathcal{T}(\mathcal{S})$ such that $\bar{P'}=P({\bf w'},\mathcal{S})$ and $c({\bf y'})=c({\bf w'},\mathcal{S})$. By Lemma \ref{sparsemin} there is some ${\bf w^*}\in\text{arg}\min_{{\bf w}\in\mathcal{T}(\mathcal{S})}c({\bf w},\mathcal{S})$ such that $|\supp({\bf w^*})|\leq \sum_{i=1}^NS_i - N + 1$. Now let $\bar{P} = P({\bf w^*},\mathcal{S})$, then by Lemma \ref{scheme}
\begin{equation}
|\supp(\bar P)|\leq |\supp({\bf w^*})|\leq \sum_{i=1}^NS_i - N + 1.
\end{equation}
Further, by Lemma \ref{scheme}  part {\em \ref{scheme_ii})}, there is an $N$-star transport {\bf y} between $\bar{P}$ and $P_1,\ldots,P_N$ such that
\begin{equation}
\sum_{i=1}^N W_2(\bar{P},P_i)^2 \leq c({\bf y}) = c({\bf w^*},\mathcal{S}) \leq c({\bf w'},\mathcal{S}) = c({\bf y'}) = \sum_{i=1}^N W_2(\bar{P'},P_i)^2\leq\sum_{i=1}^N W_2(\bar{P},P_i)^2,
\end{equation}
where the last inequality follows since $\bar{P'}$ is already a barycenter. Hence this chain of inequalities collapses into a chain of equalities and we see that $\bar{P}$ is the desired barycenter. \qed


\end{proof}

Finally, let us exhibit how to refine our results for discrete probability measures arising that are supported on an $L_1{\times}\dots{\times}L_d$-grid  in $\mathbb{R}^d$ that is uniform in all directions.

\begin{proof}[{of Corollary \ref{sparsegrid}}]

An $L_1{\times}\ldots{\times}L_d$-grid in $\mathbb{R}^d$ for ${\bf e_0} \in \mathbb{R}^d$ and linearly independent vectors ${\bf e_1},\dots,{\bf e_d}\in \mathbb{R}^d$ is the set $\{{\bf v}\in \mathbb{R}^d: {\bf v}={\bf e_0}+\sum\limits_{s=1}^d \frac{l_s}{L_s-1}{\bf e_s}: 0\leq l_s \leq L_s-1, l_s\in \mathbb{Z}\}$. Since by Proposition \ref{existdiscrete} we have $\supp(\bar{P})\subseteq S$, for each $x_j\in\supp(\bar{P})$ there exist $x_i={\bf e_0}+\sum\limits_{s=1}^d \frac{\alpha_{si}}{L_s-1}{\bf e_{s}}$ with $ 0\leq \alpha_{si} \leq L_s-1$ for all $i \leq N$ such that

\begin{equation}
x_j=\frac{1}{N}\sum_{i=1}^N x_i=\frac{1}{N}\sum_{i=1}^N {\bf e_0}+\frac{1}{N}\sum_{i=1}^N\sum\limits_{s=1}^d \frac{\alpha_{si}}{L_s-1}{\bf e_s} ={\bf e_0} + \sum\limits_{i=1}^N\sum\limits_{s=1}^d \frac{\alpha_{si}}{N\cdot (L_s-1)}{\bf e_s}.
\end{equation}
This tells us that $\supp(\bar{P})$ lies on the $(N(L_1-1)+1)\times \ldots \times (N(L_d-1)+1)$-grid for ${\bf e_0}$ and ${\bf e_1},\dots,{\bf e_d}$.

Since $\supp(P_i)$  lies on an $L_1{\times}\ldots{\times}L_d$-grid, the absolute bound on $|\supp(\bar{P})|$ follows immediately from Theorem \ref{sparsethm}. Since $\bar{P}$ is supported on a $(N(L_1-1)+1)\times \ldots \times (N(L_d-1)+1)$-grid, we observe a relative density of less than
\begin{equation}\frac{N(\prod\limits_{i=1}^d L_i-1) +1}{\prod\limits_{i=1}^d (N(L_i-1)+1)}\leq \frac{N\prod\limits_{i=1}^d L_i }{N^d\prod\limits_{i=1}^d (L_i-1)} =\frac{1}{N^{d-1}}\prod\limits_{i=1}^d\frac{ L_i}{ (L_i-1)},\end{equation}
in this grid, which proves the claim. \qed

\end{proof}

\section{Computations}\label{computations}

In this section we apply the computational and theoretical results developed in this paper to a hypothetical transportation problem for distributing a fixed set of goods, each month, to 9 California cities where the demand distribution changes month to month. A Wasserstein barycenter, in this case, represents an optimal distribution of inventory facilities which minimize squared distance/transportation costs totaled over multiple months. Although this data is artificially generated for purposes of exposition, the data is based on observed average high temperatures per month \cite{wiki:climate}. All the source code used in this section is publicly available through the on-line repository \url{https://github.com/EthanAnderes/WassersteinBarycenterCode}

The probability measures used in this example are defined on $\Bbb R^2$ and are denoted $P_{\text{dec}}$, $P_{\text{jan}}$, $P_{\text{feb}}$, $P_{\text{mar}}$, $P_{\text{jun}}$, $P_{\text{jul}}$, $P_{\text{aug}}$ and $P_{\text{sep}}$ to correspond with $8$ months of the year (scaling up to $12$ months, while not intractable, imposes unnecessary computation burdens for computational reproducibility). The support of each distribution is given by the longitude-latitude coordinates of the following $9$ California cities: Bakersfield, Eureka, Fresno, Los Angeles, Sacramento, San Bernardino, San Francisco, San Jose and  South Lake Tahoe. The mass distribution assigned to each $P_{\text{dec}},\ldots, P_{\text{sep}}$ is computed in two steps. The first step calculates 
\[(\text{population in city $C$})\times(\text{average high temp for month $M$ - $72^o$})^2\]
 for each city $C$ and each month $M$. The second step simply normalizes these values within each month to obtain $8$ probability distributions defined over the same $9$ California cities. Figure \ref{inputFigs} shows $P_{\text{feb}}$, $P_{\text{mar}}$, $P_{\text{jun}}$ and $P_{\text{jul}}$.

Let $\bar P$ denote an optimal Wasserstein barycenter as defined by Equation (\ref{two}). Proposition  \ref{existdiscrete} and Theorem \ref{sparsethm} both give bounds on the support of $\bar P$ uniformly over rearrangement of the mass assigned to each support point in $P_{\text{dec}},\ldots, P_{\text{sep}}$. Proposition \ref{existdiscrete} gives an upper bound for  $\text{supp}(\bar P)$ in the form of a finite covering set which guarantees that finite dimensional linear programing can yield all possible optimal $\bar P$ (see (\ref{LPequ})).  Conversely, Theorem \ref{sparsethm} gives an upper bound for the magnitude $|\text{supp}(\bar P)|$ which is additionally uniform over rearrangement of the locations of the support points in $P_{\text{dec}},\ldots, P_{\text{sep}}$.

In the implementation presented here we use the modeling package {\em JuMP} \cite{LubinDunningIJOC} which supports the open-source {\em COIN-OR} solver {\em Clp} for linear programming within the language {\em Julia} \cite{beks-14}.  The set $S$, defined in (\ref{centers}), covers the support of $\bar P$ and is shown in the rightmost image of Figure \ref{barycenterFigs}. A typical {\em stars and bars} combinatorial calculation yields $|S|=\binom{9+8-1}{9-1}=\binom{16}{8}=12870$. The corresponding LP problem for $\bar P$ therefore has $939510$ variables with $103032$ linear constraints. On a 2.3 GHz Intel Core i7 MacBook Pro a solution was reached after $505$ seconds (without using any pre-optimization step). The solution is shown  in the leftmost image of Figure \ref{barycenterFigs}.
Notice that Theorem \ref{sparsethm} establishes an upper bound of $65 = 9 \cdot 8 - 8 + 1$ for $|\text{supp}(\bar P)|$. The LP solution yields $|\text{supp}(\bar P)| = 63$. Not only does this give good agreement with the sparsity bound from Theorem \ref{sparsethm} but also illustrates that Wasserstein barycenters are very sparse with only $0.5\%$ of the possible support points in $S$ getting assigned non-zero mass.

In Figure \ref{transportFigs} we illustrate Theorem \ref{nomasssplit} which guarantees the existence of pairwise optimal transport maps from $\bar P$ to each $P_{\text{dec}},\ldots, P_{\text{sep}}$ which do not split mass. The existence of these discrete non-mass-splitting optimal transports is a special property of $\bar P$. Indeed, unless special mass balance conditions hold, there will not exist any transport map (optimal or not) between two discrete probability measures. The implication for this example is that {\em all} the inventory stored at a barycenter support point will be optimally shipped to exactly one city each month. Moreover, since the transportation displacements must satisfy Theorem \ref{nomasssplit} {\em \ref{nomasssplit_iii})} each city is at the exact center of its $8$ monthly transportation plans.

\section*{Acknowledgements}
SB acknowledges support from the Alexander-von-Humboldt Foundation. JM acknowledges support from a UC-MEXUS grant. EA acknowledges support from NSF CAREER grant DMS-1252795. The authors would like to thank  Jes\'us A. De Loera,  Hans M\"uller and Jonathan Taylor for many enlightening discussions on Wasserstein barycenters.
\newpage

\bibliography{barycenters_literature}
\bibliographystyle{plain}

\end{document}